\documentclass[11pt]{amsart}


\usepackage{amsmath}
\usepackage{amssymb}
\usepackage{bm}
\usepackage{graphicx}
\usepackage{psfrag}
\usepackage{color}
\usepackage{hyperref}
\hypersetup{colorlinks=true, linkcolor=blue, citecolor=magenta, urlcolor=wine}
\usepackage{url}
\usepackage{algpseudocode}
\usepackage{fancyhdr}
\usepackage{mathtools}
\usepackage{tikz-cd}
\usepackage{xy}
\input xy
\xyoption{all}
\usepackage{stmaryrd}
\usepackage{calrsfs}


\voffset=-1.4mm
\oddsidemargin=14pt
\evensidemargin=14pt
\topmargin=26pt
\headheight=9pt     
\textheight=576pt
\textwidth=441pt 
\parskip=0pt plus 4pt


\pagestyle{fancy}
\fancyhf{}

\fancyhead[CE]{\fontsize{9}{11}\selectfont K. AJRAN, J. BRINGAS, B. LI, E. SINGER, AND M. TIRADOR}
\fancyhead[CO]{\fontsize{9}{11}\selectfont FACTORIZATION IN ADDITIVE MONOIDS OF EVALUATION POLYNOMIAL SEMIRINGS}
\fancyhead[LE,RO]{\thepage}

\setlength{\headheight}{9pt}


\newtheorem*{maintheorem*}{Main Theorem}
\newtheorem{theorem}{Theorem}[section]
\newtheorem*{theorem*}{Main Theorem}

\newtheorem{question}[theorem]{Question}
\newtheorem{prop}[theorem]{Proposition}

\newtheorem{lemma}[theorem]{Lemma}
\newtheorem{cor}[theorem]{Corollary}

\theoremstyle{definition}
\newtheorem{definition}[theorem]{Definition}

\newtheorem{example}[theorem]{Example}
\numberwithin{equation}{section}


\newcommand{\cc}{\mathbb{C}}

\newcommand{\nn}{\mathbb{N}}
\newcommand{\pp}{\mathbb{P}}
\newcommand{\qq}{\mathbb{Q}}
\newcommand{\rr}{\mathbb{R}}
\newcommand{\zz}{\mathbb{Z}}

\newcommand{\uu}{\mathcal{U}}

\providecommand\ldb{\llbracket}
\providecommand\rdb{\rrbracket}

\newcommand{\gp}{\text{gp}}

\newcommand{\supp}{\textsf{supp}}

\keywords{polynomial semirings, factorizations, atomicity, sets of lengths, Betti elements, catenary degree, bifurcus monoid}

\subjclass[2010]{Primary: 13A05, 16Y60; Secondary: 06F05, 11R09} 

\begin{document}
	
\mbox{}
\title{Factorization in additive monoids of evaluation polynomial semirings}

\author{Khalid Ajran}
\address{Department of Mathematics\\MIT\\Cambridge, MA 02139, US}
\email{kajran@mit.edu}

\author{Juliet Bringas}
\address{Matcom, Universidad de La Habana, Habana 10400, Cuba}
\email{julybm01@gmail.com}

\author{Bangzheng Li}
\address{Department of Mathematics\\MIT\\Cambridge, MA 02139, US}
\email{liben@mit.edu}

\author{Easton Singer}
\address{Department of Mathematics\\Harvard University\\Cambridge, MA 02138, US}
\email{esinger@college.harvard.edu}

\author{Marcos Tirador}
\address{Matcom, Universidad de La Habana, Habana 10400, Cuba}
\email{marcosmath44@gmail.com}

\date{\today}
	
\begin{abstract}
	For a positive real $\alpha$, we can consider the additive submonoid $M$ of the real line that is generated by the nonnegative powers of $\alpha$. When $\alpha$ is transcendental, $M$ is a unique factorization monoid. However, when~$\alpha$ is algebraic, $M$ may not be atomic, and even when $M$ is atomic, it may contain elements having more than one factorization (i.e., decomposition as a sum of irreducibles). The main purpose of this paper is to study the phenomenon of multiple factorizations inside $M$. When~$\alpha$ is algebraic but not rational, the arithmetic of factorizations in $M$ is highly interesting and complex. In order to arrive to that conclusion, we investigate various factorization invariants of $M$, including the sets of lengths, sets of Betti elements, and catenary degrees. Our investigation gives continuity to recent studies carried out by Chapman, et al. in 2020 and by Correa-Morris and Gotti in~2022.
\end{abstract}

\bigskip
\maketitle

\section{Introduction}
\label{sec:intro}

Following Gotti \cite{fG19}, we say that an additive submonoid of $\rr$ consisting of nonnegative numbers is a positive monoid. Let $M$ be a positive monoid. A nonzero element of $M$ is called an atom (or an irreducible) if it cannot be written as a sum of two nonzero elements of $M$, and $M$ is called atomic if every nonzero element is the sum of finitely many atoms. For $r \in M$, a formal sum of atoms of $M$ whose actual sum in $M$ is $r$ is called a factorization of~$r$ in~$M$. Factorizations in positive monoids have been studied in~\cite{fG19} by Gotti, in~\cite{mBA19,mBA20} by Bras-Amoros, and more recently in~\cite{GV23} by Gotti and Vulakh. In this paper, we will study factorizations in a special class of positive monoids: for each positive real $\alpha$, the smallest positive monoid $\nn_0[\alpha]$ containing all the nonnegative powers of $\alpha$.
\smallskip

A positive monoid that is closed under multiplication is called a positive semiring. Factorizations in the multiplicative structure of positive semirings were studied by Cesarz et al. in~\cite{CCMS09}, where the elasticity of two classes of positive monoids were considered (the elasticity is an arithmetic invariant that somehow measures how chaotic the phenomenon of multiple factorizations is in a given monoid). Then Baeth and Gotti in~\cite{BG20} used examples of positive semirings to construct multiplicative monoids of matrices with certain desired factorization properties. Motivated by this work, the atomicity of both the additive and the multiplicative structures of further classes of positive monoids was studied by Baeth, Chapman, and Gotti in~\cite{BCG21}. Examples of positive semirings exhibiting a variety of factorization properties can also be found in the recent papers \cite{GP22,GP23} by Gotti and Polo.
\smallskip

The monoids $\nn_0[\alpha]$ we are interested in this paper are also positive semirings. Indeed, if $\nn_0[X]$ denotes the semiring consisting of all the polynomials with nonnegative integer coefficients, then $\nn_0[\alpha] = \{p(\alpha) \mid p(X) \in \nn_0[X]\}$, which justifies the notation. Accordingly, we call $\nn_0[\alpha]$ the evaluation polynomial semiring at $\alpha$ or just an evaluation polynomial semiring. For a positive rational $q$, factorizations in (the additive monoid of) $\nn_0[q]$ were first studied by Chapman, Gotti, and Gotti in~\cite{CGG20}. A more systematic approach to factorizations of $\nn_0[\alpha]$ in the more general context provided by any positive real $\alpha$ was given by Correa-Morris and Gotti in~\cite{CG22}. The present paper is motivated by~\cite{CG22}, and can be considered a continuation of the latter paper. Two more papers extending the work in~\cite{CGG20,CG22} are~\cite{sZ22} by Zhu and~\cite{JLZ23} by Jiang, Li, and Zhu.
\smallskip

The number of atoms (counting repetitions) in a factorization $z$ of a nonzero $b \in M$ is called the length of $z$, and the set consisting of the lengths of all factorizations of $b$ is called the set of lengths of $b$. After introducing some preliminary notation, definitions, and known results in Section~\ref{sec:prelim}, we move to Section~\ref{sec:sets of lengths}, where we consider the sets of lengths of evaluation polynomial semirings. Sets of lengths are crucial objects in factorization theory (see the survey~\cite{aG16} for more details). Following Baeth and Cassity~\cite{BC12}, we say that an atomic positive monoid is $k$-furcus if every nonzero element has a factorization of length at most~$k$. The property of being $2$-furcus has been considered by Adams et al.~\cite{AABHKMPR11} in the context of matrix monoids, by Geroldinger, Kainrath, and Reinhart~\cite[Section~3]{GKR15} in the context of finitely primary monoids, and by Gotti and O'Neill~\cite[Section~6]{GO20} in the context of Puiseux monoids. The more general property of being $k$-furcus has been considered by Geroldinger and Hassler~\cite[Section~4]{GH08} in the context of $v$-Noetherian monoids. We show that there are no $k$-furcus evaluation polynomial semirings for any positive integer~$k$. 
\smallskip

In Section~\ref{sec:sets of lengths}, we also investigate the structure of sets of lengths of evaluation polynomial semirings. The structure of the sets of lengths of many classes of atomic monoids has been the subject of a great deal of investigation for almost three decades: see \cite[Chapter~4.7]{GH06} by Geroldinger and Halter-Koch for background on the Structure Theorem for Sets of Lengths, and see the recent paper \cite{GK22} by Geroldinger and Khadam (and the references therein) for current results in this direction. For any positive rational~$q$, it was proved by Chapman et al.~\cite{CGG20} that the set of lengths of every nonzero element of $\nn_0[q]$ is an arithmetic progression, provided that $\nn_0[q]$ is atomic. In Subsection~\ref{subsec:sets of lengths}, we extend this result to more general evaluation polynomial semirings. We conclude Section~\ref{sec:sets of lengths} by constructing an evaluation polynomial semiring having sets of lengths that are as different from an arithmetic progression as one could possibly expect.
\smallskip

In Section~\ref{sec:Betti elements}, we study the set of Betti elements of evaluation polynomial semirings. The Betti graph of a nonzero element $b \in M$ is the graph whose vertices are the factorizations of $b$ with an edge between two factorizations precisely when they have an atom in common. Then an element of $M$ is called a Betti element if its Betti graph is disconnected. First, we prove that when $\alpha$ is a positive algebraic number such that $\nn_0[\alpha]$ is atomic, then $\nn_0[\alpha]$ has finitely many Betti elements if and only if it is finitely generated. Then we explicitly compute the set of Betti elements of evaluation polynomial semirings at those positive real numbers~$\alpha$ satisfying that $\alpha^n \in \nn$ for some $n \in \nn$.
\smallskip

In Section~\ref{sec:catenary degree}, we study the catenary degree of evaluation polynomial semirings. The catenary degree, which we will define in the next section, is an arithmetic invariant used to measure how drastic the phenomenon of multiple factorizations is in a given atomic monoid. The catenary degree of atomic positive monoids $\nn_0[q]$, where $q$ is a positive rational, is finite and it was explicitly computed in~\cite[Corollary~3.4]{CGG20}. Although establishing a formula for the catenary degree of any evaluation polynomial semiring seems to be more subtle, at the end of Section~\ref{sec:catenary degree} we present a realization result for the catenary degree of evaluation polynomial semirings. We prove that, for any value $c \in \nn_{\ge 3} \cup \{\infty\}$, there exists a positive algebraic number $\alpha$ (resp., a algebraic number $\beta$) such that $\nn_0[\alpha]$ satisfies the ascending chain condition on principal ideals or ACCP for short (resp., $\nn_0[\beta]$ is atomic but does not satisfy ACCP) and has catenary degree~$c$. The notion of ACCP will be introduced in the next section.

\bigskip
\section{Preliminary}
\label{sec:prelim}

\smallskip
\subsection{General Notation}

As is customary, $\zz$, $\zz/ \hspace{-1pt} m\zz$, $\qq$, $\rr$, and $\cc$ will denote the set of integers, integers modulo $m$, rational numbers, real numbers, and complex numbers, respectively. We let $\nn$ and $\nn_0$ denote the set of positive and nonnegative integers, respectively. In addition, we let $\pp$ denote the set of primes. For $a,b \in \zz$ with $a \le b$, we let $\ldb a,b \rdb$ denote the set of integers between~$a$ and $b$, i.e., $\ldb a,b \rdb = \{n \in \zz \mid a \le n \le b\}$. In addition, for $S \subseteq \rr$ and $r \in \rr$, we set $S_{\ge r} = \{s \in S \mid s \ge r\}$ and $S_{> r} = \{s \in S \mid s > r\}$. For a positive rational $q = \frac nd$, where $n,d \in \nn$ and $\gcd(n,d) = 1$, we let $n$ and $d$ be denoted by $\mathsf{n}(q)$ and $\mathsf{d}(q)$.

\smallskip
\subsection{Atomic Monoids}

Although a monoid is usually defined to be a semigroup with an identity element, here we will additionally assume that all monoids are cancellative and commutative. Let $M$ be a monoid written additively. We set $M^\bullet := M \setminus \{0\}$. The \emph{difference group} $\gp(M)$ of a monoid $M$ is the set of formal differences of elements in~$M$ (i.e., the unique abelian group $\gp(M)$ up to isomorphism satisfying that any abelian group containing a homomorphic image of $M$ will also contain a homomorphic image of $\gp(M)$). The rank of $M$ is the dimension of $\qq \otimes \gp(M)$ as a $\qq$-vector space. The group of invertible elements of~$M$ is denoted by $\uu(M)$. The monoid $M$ is called \emph{reduced} if the only invertible element of $M$ is $0$. An element $a \in M \! \setminus \! \uu(M)$ is an \emph{atom} (or \emph{irreducible}) if whenever $a = u+v$ for some $u,v \in M$, then either $u \in \uu(M)$ or $v \in \uu(M)$. The set of atoms of $M$ is denoted by $\mathcal{A}(M)$. Following Cohn~\cite{pC68}, we say that $M$ is \emph{atomic} if every non-invertible element can be written as a sum of atoms. Let $S$ be a subset of $M$. We let $\langle S \rangle$ denote the smallest submonoid of $M$ containing $S$, and we call $\langle S \rangle$ the submonoid of $M$ \emph{generated} by $S$. If $M = \langle S \rangle$, then $S$ is called a \emph{generating set} of $M$, and $M$ is called \emph{finitely generated} provided that $M$ has a finite generating set.
\smallskip

A subset $I$ of $M$ is an \emph{ideal} of~$M$ provided that $I + M := \{b+c \mid b \in I \text{ and } c \in M\} = I$ or, equivalently, $I + M \subseteq I$. The ideal $I$ is \emph{principal} if the equality $I = b + M$ holds for some $b \in M$. The monoid $M$ is said to satisfy the \emph{ascending chain condition on principal ideals} (or ACCP for short) provided that every ascending chain of principal ideals of $M$ eventually stabilizes. It is well known that every monoid that satisfies the ACCP is atomic (see \cite[Proposition~1.1.4]{GH06}). The converse does not hold in general, and several examples and classes of atomic monoids that do not satisfy the ACCP can be found in the recent paper~\cite{GL23}. The first example of an integral domain whose multiplicative monoid is atomic but does not satisfy the ACCP was given by Grams in~\cite{aG74}.

\smallskip
\subsection{Factorizations and their Lengths}

Observe that the monoid $M$ is atomic if and only if its quotient monoid $M_{\text{red}} = M/\uu(M)$ is atomic. Let $\mathsf{Z}(M)$ be the free (commutative) monoid on the set of atoms $\mathcal{A}(M_{\text{red}})$. The elements of $\mathsf{Z}(M)$ are called \emph{factorizations}. It turns out that the set $\mathsf{Z}(M)$ under the formal sum of atoms is also an atomic monoid. We can define the notion of a greatest common divisor in the monoid $\mathsf{Z}(M)$ as follows: for any two factorizations $z = \sum_{a \in \mathcal{A}(M)} \mu_a a$ and $z' = \sum_{a \in \mathcal{A}(M)} \nu_a a$ in $\mathsf{Z}(M)$ (here all but finitely many coefficients are zero) set
\[
	\gcd(z,z') := \sum_{a \in \mathcal{A}(M)} \min\{\mu_a, \nu_a\} a,
\]
and call $\gcd(z,z')$, which is also an element of $\mathsf{Z}(M)$, the \emph{greatest common divisor} of $z$ and~$z'$. Let $\pi \colon \mathsf{Z}(M) \to M_\text{red}$ be the unique monoid homomorphism fixing the set $\mathcal{A}(M_{\text{red}})$. For any $b \in M$, we set $\mathsf{Z}(b) := \pi^{-1}(b)$; that is, we let $\mathsf{Z}(b)$ be the set of all formal sums $a_1 + \dots + a_\ell$ in $\mathsf{Z}(M)$, where $a_1, \dots, a_\ell \in \mathcal{A}(M_{\text{red}})$, whose actual sum in $M_{\text{red}}$ is $b + \mathcal{U}(M)$. The elements of $\mathsf{Z}(b)$ are called \emph{factorizations} of $b$. If $|\mathsf{Z}(b)| = 1$ for every $b \in M$, then $M$ is called a \emph{unique factorization monoid} (or a UFM for short). On the other hand, if $M$ is atomic and $|\mathsf{Z}(b)| < \infty$ for every $b \in M$, then $M$ is called a \emph{finite factorization monoid} (or an FFM for short). It follows from the definitions that every UFM is an FFM. Every  finitely generated monoid is an FFM (see \cite[Proposition~2.7.8]{GH06}).
\smallskip

If $z := a_1 + \dots + a_\ell$ is a factorization in $\mathsf{Z}(M)$ for $a_1, \dots, a_\ell \in \mathcal{A}(M_{\text{red}})$, then $\ell$ is denoted by $|z|$ and called the \emph{length} of~$z$. Now, for every $b \in M$, we set
\[
	\mathsf{L}(b) = \{ |z| \mid z \in \mathsf{Z}(b) \},
\]
and call $\mathsf{L}(b)$ the \emph{set of lengths} of $b$. If $M$ is atomic and $|\mathsf{L}(b)| < \infty$ for every $b \in M$, then $M$ is called a \emph{bounded factorization monoid} (or a BFM for short). It follows directly from the definitions that every FFM is a BFM. In addition, one can readily argue that every BFM must satisfy the ACCP (\cite[Corollary~1.4.4]{GH06}). The bounded and finite factorization properties were introduced by Anderson, Anderson, and Zafrullah~\cite{AAZ90} in the context of integral domains (see \cite{AG22} for a recent survey), and the same properties were generalized by Halter-Koch~\cite{fHK92} to context of monoids. Examples of positive monoids that are BFMs but do not satisfy the ACCP as well as examples of positive monoids that are FFMs but not BFMs can be found in~\cite{fG22}, and the same type of examples in the setting of positive semirings can be found in~\cite{BCG21}.

\smallskip
\subsection{Betti Elements and the Catenary Degree}

Let $M$ be an atomic monoid. The \emph{Betti graph} $\nabla_b$ of an element $b \in M$ is the graph whose set of vertices is $\mathsf{Z}(b)$ having an edge between $z,z' \in \mathsf{Z}(b)$ if and only if $\gcd(z,z') \neq 0$. An element of $M$ is called a \emph{Betti element} if its Betti graph is disconnected.

We can use the notion of greatest common divisors in $\mathsf{Z}(M)$ to measure the distance between factorizations and to turn $\mathsf{Z}(M)$ into a metric space. We call
\[
	\mathsf{d}(z,z') := \max \big\{ |z| - |\gcd(z,z')|, |z'| - |\gcd(z,z')| \big\}
\]
the distance between $z$ to $z'$ in $\mathsf{Z}(M)$. It is not hard to argue that $\mathsf{Z}(M)$ is a metric space with respect to the distance $\mathsf{d}$ we have just defined. For $N \in \mathbb{N}_0 \cup \{\infty\}$, a finite sequence $z_0, z_1, \dots, z_k$ in $\mathsf{Z}(x)$ is called an $N$-chain of factorizations connecting $z$ and $z'$ if $z_0 = z$, $z_k = z'$, and $\mathsf{d}(z_{i-1}, z_i) \le N$ for every $i \in \ldb 1, k \rdb$. The \emph{catenary degree} of an element $b \in M$, denoted by $\mathsf{c}(b)$, is the smallest $n \in \mathbb{N}_0 \cup \{\infty\}$ such that for any two factorizations in $\mathsf{Z}(b)$, there exists an $n$-chain of factorizations connecting them. The \emph{catenary degree} of $M$ is
\[
	\mathsf{c}(M) := \sup \{ \mathsf{c}(b) \mid b \in M \} \in \mathbb{N}_0 \cup \{ \infty \}.
\]

\smallskip
\subsection{Minimal Pairs and Evaluation Polynomial Semirings}

Let $\alpha \in \mathbb{C}$ be an algebraic number (over $\qq)$. The \emph{support} of $f(X)$ is the set of exponents of the monomials appearing in the canonical representation of a polynomial $f(X) \in \qq[X]$ , and we denote it by $\supp \, f(X)$:
\[
	\supp \, f(X) := \{n \in \nn_0 \mid f^{(n)}(0) \neq 0 \},
\]
where $f^{(n)}$ denotes the $n$-th derivative of $f$. Observe that there exists a unique $\ell \in \nn$ such that the polynomial $\ell m_\alpha(X) \in \zz[X]$ has content~$1$; that is, the greatest common divisor of all the coefficients of $\ell m_\alpha(X)$ is~$1$. Also, there are unique polynomials $p_\alpha(X), q_\alpha(X) \in \nn_0[X]$ such that $\ell m_\alpha(X) = p_\alpha(X) - q_\alpha(X)$ and $\supp \, p_\alpha(X) \bigcap \supp \, q_\alpha(X)$ is empty. We call $(p_\alpha(X), q_\alpha(X))$ the \emph{minimal pair} of $\alpha$.
\smallskip

For any $\alpha \in \rr$, we set
\[
	\nn_0[\alpha] = \{p(\alpha) \mid p(X) \in \nn_0[X]\}.
\]
As mentioned in the introduction, $\nn_0[\alpha]$ is a semiring, which we call the \emph{evaluation polynomial semiring} at $\alpha$ or just an \emph{evaluation polynomial semiring}. The rest of this paper is devoted to study the phenomenon of non-unique (additive) factorizations in the setting of evaluation polynomial semirings. It follows from \cite[Section~5]{GG18} that if $q$ is a positive rational, then $\nn_0[q]$ is atomic if and only if $q^{-1} \notin \nn_{\ge 2}$, in which case $\mathcal{A}(\nn_0[q]) = \{q^n \mid n \in \nn_0\}$. A deeper and more general study of the atomicity of $\nn_0[\alpha]$ when $\alpha$ is a positive algebraic number was recently carried out in~\cite{CG22}.

\bigskip
\section{Sets of Lengths}
\label{sec:sets of lengths}

In this section we address the following two fundamental questions about the sets of lengths of the monoids $\nn_0[\alpha]$ for positive algebraic $\alpha$.

\begin{question}
	Let $\alpha$ be a positive algebraic number.
	\begin{enumerate}
		\item Is there a $k \in \nn$ such that each $x \in \nn_0[\alpha]$ has a factorization of length at most $k$?
		\smallskip
		
		\item Is there an $N \in \nn$ such that the set of lengths of each $x \in \nn_0[\alpha]$ is an arithmetic progression after we delete from it the intervals $\ldb \min \mathsf{L}(x), \min \mathsf{L}(x) + N \rdb$ and $\ldb \sup \mathsf{L}(x) - N, \sup \mathsf{L}(x) \rdb$, where the last interval is the empty set when $\sup \mathsf{L}(x) = \infty$?
	\end{enumerate}
\end{question}

As we will see through this section, these two questions are relevant in the factorization theory literature and have been considered in various classes of monoids with the aim of understanding their arithmetic of factorizations.

\medskip
\subsection{$k$-Furcusness}
\label{subsec:furcuness}

To formalize the first question given in this section, we introduce the notion of $k$-furcusness.

\begin{definition}
	For $k \in \mathbb{N}$, we say that the atomic monoid $M$ is \emph{$k$-furcus} provided that for each non-invertible element $x \in M$ the set $\mathsf{L}(x) \cap \{ 1, \ldots, k \}$ is nonempty.
\end{definition}

Thus, an atomic monoid is $k$-furcus for some $k \in \nn$ provided that every (non-invertible) element of $M$ has a factorization whose length is at most $k$. As our next proposition indicates, no monoid of the form $\nn_0[\alpha]$ is $k$-furcus. First, we need the following known lemma, which we prove here for the sake of completeness. For any two subsets $A$ and $B$ of $\rr$, we set
\[
	A+B := \{a+b \mid a \in A \text{ and } b \in B\}.
\]

\begin{lemma} \label{lem:sum of closed is closed}
	Let $A$ and $B$ be two subsets of $\rr_{\ge 0}$. If $A$ and $B$ are both closed under the Euclidean topology, then so is $A + B$.
\end{lemma}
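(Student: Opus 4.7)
The plan is to prove this via sequential closedness, using the essential hypothesis that $A, B \subseteq \rr_{\ge 0}$ to gain compactness for free. Note that the nonnegativity assumption is crucial: without it the statement fails (e.g.\ $\nn + \{-n - \tfrac1n : n \ge 2\}$ has $0$ as a limit point but $0$ is not in the sum).

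First I would take an arbitrary sequence $(c_n)_{n \ge 1}$ in $A+B$ converging to some $c \in \rr$, and pick decompositions $c_n = a_n + b_n$ with $a_n \in A$ and $b_n \in B$. The key observation is that because $a_n, b_n \ge 0$, the bound $a_n \le a_n + b_n = c_n$ together with the convergence $c_n \to c$ forces the sequence $(a_n)$ to be bounded; the same argument applies to $(b_n)$.

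Next I would invoke the Bolzano--Weierstrass theorem to extract a subsequence $(a_{n_k})$ converging to some $a \in \rr$, and then apply Bolzano--Weierstrass again to the bounded sequence $(b_{n_k})$ to extract a further subsequence $(b_{n_{k_j}})$ converging to some $b \in \rr$. Since $A$ and $B$ are closed and contain the terms $a_{n_{k_j}}$ and $b_{n_{k_j}}$ respectively, the limits satisfy $a \in A$ and $b \in B$. Passing to the limit in the identity $c_{n_{k_j}} = a_{n_{k_j}} + b_{n_{k_j}}$ yields $c = a + b \in A+B$, which shows $A+B$ is sequentially closed, hence closed in the Euclidean topology of $\rr$.

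There is no real obstacle here; the entire content of the lemma is that nonnegativity converts ``sum bounded'' into ``each summand bounded,'' after which compactness and closedness finish the job mechanically. I would keep the write-up short and emphasize exactly where the hypothesis $A, B \subseteq \rr_{\ge 0}$ enters, since this is what distinguishes the lemma from the (false) general statement about sums of closed subsets of $\rr$.
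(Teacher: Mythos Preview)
Your proof is correct and follows essentially the same approach as the paper's: both use sequential closedness, invoke nonnegativity to bound the summand sequences, and extract a convergent subsequence via Bolzano--Weierstrass. The only cosmetic difference is that the paper observes $b_{n_k} = c_{n_k} - a_{n_k}$ converges automatically once $(a_{n_k})$ does, so your second subsequence extraction is unnecessary (though harmless).
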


\begin{proof}
	Suppose, towards a contradiction, that there is a real number $r \notin A + B$ such that $r$ is a limit point of $A + B$. Then there exist infinite sequences $(a_n)_{n \in \nn}$ and $(b_n)_{n \in \nn}$, whose underlying sets are contained in $A$ and $B$, respectively, such that the sequence $(a_n + b_n)_{n \in \nn}$ converges to $r$. Therefore $(a_n + b_n)_{n \in \nn}$ is bounded by some $N \in \nn$, and so the fact that $(b_n)_{n \in \nn}$ is a sequence of positive terms ensures that $(a_n)_{n \in \nn}$ is bounded by $N$. Hence, after replacing $(a_n + b_n)_{n \in \nn}$ by a suitable subsequence, we can assume that $(a_n)_{n \in \nn}$ converges, namely, $\lim_{n \to \infty} a_n = a$ for some $a \in \rr_{\ge 0}$. This, in turn, implies that the sequence $(b_n)_{n \in \nn}$ converges to $r - a$. Since both $A$ and $B$ are closed, $a \in A$ and $r - a \in B$, which implies that $r = a + (r - a) \in A + B$, a contradiction.
\end{proof}

\begin{prop}
	For any $k \in \nn$ and $\alpha \in \mathbb{R}_{> 0}$, the monoid $\mathbb{N}_0[\alpha]$ is not $k$-furcus.
\end{prop}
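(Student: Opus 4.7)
My plan is to argue by contradiction. Since being $k$-furcus presupposes atomicity, if $\nn_0[\alpha]$ fails to be atomic the conclusion is immediate; so I assume $\nn_0[\alpha]$ is atomic and suppose, towards a contradiction, that it is $k$-furcus. The key preliminary observation I would establish is that every atom of $\nn_0[\alpha]$ is of the form $\alpha^n$ for some $n \in \nn_0$: if $a = \sum_i c_i \alpha^i \in \nn_0[\alpha]$ has any representation with $\sum_i c_i \ge 2$, then picking $j$ with $c_j \ge 1$ gives the nontrivial decomposition $a = \alpha^j + (a - \alpha^j)$ into two nonzero elements of $\nn_0[\alpha]$, contradicting atomicity of $a$. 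Hence every atom is some $\alpha^j$.

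Next I would split on whether $\alpha \le 1$ or $\alpha > 1$. When $\alpha \le 1$, every power of $\alpha$ lies in $(0,1]$, hence all atoms are at most $1$, and every factorization of $x \in \nn_0[\alpha]$ has length at least $x$. Taking $x = k+1 \in \nn_0 \subseteq \nn_0[\alpha]$ then yields $\min \mathsf{L}(x) \ge k+1 > k$, a contradiction.

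The main obstacle is the case $\alpha > 1$, where atoms can be arbitrarily large so the ``length $\ge$ value'' bound fails. Here I would argue by counting. For each $N \in \nn$, the atoms lying in $[0,N]$ are contained in $\{\alpha^n : \alpha^n \le N\}$, a set of cardinality at most $\lfloor \log_\alpha N \rfloor + 1$. Consequently, the number of multisets of such atoms of size at most $k$ is at most $\binom{\lfloor \log_\alpha N \rfloor + 1 + k}{k}$, which grows only polynomially in $\log N$. However, $\nn_0[\alpha] \cap [0,N] \supseteq \{0, 1, \dots, N\}$ has at least $N+1$ elements, exceeding any polynomial in $\log N$ for $N$ sufficiently large. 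So some element of $\nn_0[\alpha] \cap [0,N]$ admits no factorization of length at most $k$, contradicting $k$-furcusness. An alternative route here uses Lemma~\ref{lem:sum of closed is closed} iteratively to conclude that the set of sums of at most $k$ atoms is closed in $\rr_{\ge 0}$ and hence, under $k$-furcusness, coincides with $\nn_0[\alpha]$; one can then derive a contradiction by exhibiting suitable non-closed behavior of $\nn_0[\alpha]$ (for instance, its density in a half-line $[M, \infty)$ when $\alpha > 1$ is a non-integer rational).
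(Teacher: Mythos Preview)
Your proof is correct and takes a genuinely different route from the paper's. The paper does not split into cases on the size of $\alpha$; instead it works uniformly with the set $A = \{0\} \cup \{\alpha^n \mid n \in \zz\}$ (allowing negative exponents), applies Lemma~\ref{lem:sum of closed is closed} to conclude that the $k$-fold sumset $k \cdot A$ is closed and countable, hence not dense in $\rr_{\ge 0}$, finds an open interval $(a,b)$ missed by $k\cdot A$, and then scales by a suitable power $\alpha^m$ to place an integer $\beta$ in $(\alpha^m a, \alpha^m b)$. Any length-$\ell \le k$ factorization $\beta = \sum \alpha^{m_i}$ would then give $\beta/\alpha^m \in (a,b) \cap k\cdot A$, a contradiction.

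Your argument is more elementary: the $\alpha \le 1$ case is dispatched by the trivial bound $|z| \ge \pi(z)$, and the $\alpha > 1$ case by a clean pigeonhole count comparing the $O((\log N)^k)$ multisets of at most $k$ atoms below $N$ against the $N+1$ integers in $[0,N] \cap \nn_0[\alpha]$. This bypasses Lemma~\ref{lem:sum of closed is closed} entirely and requires no topology. The paper's approach, on the other hand, is uniform in $\alpha$ and showcases the closure lemma, which is reused later in Example~\ref{ex:set of lengths that is not an AAP}. Your brief ``alternative route'' at the end is underdeveloped and only gestures at special $\alpha$, but since your main counting argument is complete it does not affect the validity of the proof.
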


\begin{proof}
	Suppose, by way of contradiction, that there exist $k \in \nn$ and $\alpha \in \rr_{>0}$ such that the monoid $M := \mathbb{N}_0[\alpha]$ is $k$-furcus. Since $M$ is $k$-furcus but not a group, it cannot be a UFM, and so $\alpha \neq 1$. Now consider the set
	\[
		A := \{0\} \cup \{\alpha^n \mid n \in \mathbb{Z}\}.
	\]
	Observe that $A$ is a countable subset of $\rr_{\ge 0}$ that is closed with respect to the Euclidean topology. Therefore the $k$-fold set $k \cdot A := A + A + \cdots + A$ ($k$ times) is also countable and, in light of Lemma~\ref{lem:sum of closed is closed}, closed under the Euclidean topology. This shows that it cannot be dense in $\mathbb{R}_{\ge 0}$ (as otherwise because it is closed, it would equal $\mathbb{R}_{\ge 0}$, which is not countable). Thus, there exist $a,b \in \rr_{> 0}$ with $a < b$ such that the interval $(a, b)$ is disjoint from $k \cdot A$. Now take $m \in \mathbb{Z}$ such that $(\alpha^m a, \alpha^m b) \cap \nn$ is a nonempty set, and then take $\beta \in (\alpha^m a, \alpha^m b)\cap \mathbb{N}$. Since $\nn \subseteq M^\bullet$ and $M$ is $k$-furcus, we see that $\beta \in M$ and also that $\beta$ has a factorization in $M$ of length $\ell$ with $\ell \le k$. Then after writing $\beta = \sum_{i=1}^\ell \alpha^{m_i}$ for some $m_1, \dots, m_\ell \in \nn_0$, we observe that $\beta/\alpha^m = \sum_{i=1}^\ell \alpha^{m_i - m} \in k \cdot A$. However, $\beta/\alpha^m \in (a,b)$, which contradicts the fact that the interval $(a,b)$ is disjoint from the set $k \cdot A$.
\end{proof}

Observe that, for each $k \in \nn$, any $k$-furcus monoid is $(k+1)$-furcus. On the other hand, for each $k \in \nn$, there are $(k+1)$-furcus monoids that are not $k$-furcus. Here is an additive version of an example of a $(k+1)$-furcus monoid that is not $k$-furcus, provided by Baeth and Cassity in~\cite{BC12}.

\begin{example}
	Fix $k \in \nn$ with $k \ge 3$. Now, for each $i \in \ldb 1,k \rdb$, consider the subset $A_i \subset \nn^k$ defined as follows
	\[
		A_i = \{ (v_1, \dots, v_k) \in \nn^k \mid v_j= 1 \text{ for all } j \in \ldb 1,k \rdb \setminus \{i\} \, \}.
	\]
	Set $A := \bigcup_{i=1}^k A_i $ and consider the additive submonoid $M := \langle A \rangle$ of $\nn_0^k$.  One can easily see that every $k$-tuple in $A$ is an atom of $M$ as such a $k$-tuple has at least one coordinate equal to $1$. Thus, $M$ is atomic with $\mathcal{A}(M) = A$. We shall show that~$M$ is $k$-furcus but not $(k-1)$-furcus.
	
	Assume, by way of contradiction, that $M$ is $(k-1)$-furcus. Now consider the element $u := (k, \dots, k) \in M$ and write $u = a_1 + \dots + a_\ell$ for some $\ell \in \ldb 1, k - 1 \rdb$ and $a_1, \dots, a_\ell \in A$. By virtue of the pigeonhole principle, there exists $j \in \ldb 1, k \rdb$ such that $a_i \notin A_j \setminus \{ (1, 1, \ldots, 1) \}$ for any $i \in \ldb 1, \ell \rdb$. Then every $a_i$ has a $1$ in its $j$-th coordinate and, therefore, the $j$-th coordinate of $u$ equals $\ell$, which is a contradiction. Hence $M$ is not $(k-1)$-furcus.
	
	Finally, let us prove that $M$ is $k$-furcus. To do so, consider any element $v := (v_1, \dots, v_k)$ having a factorization of length larger than $k$. Observe that $v_i > k$ for every $i \in \ldb 1, k \rdb$. For each $i \in \ldb 1, k \rdb$, set $a_i := (1, \dots, v_i - k + 1, \dots, 1)$, that is, $a_i$ is the $k$-tuple with $1$ in every coordinate except for the $i$-th coordinate, which is equal to $v_i - k + 1$. Since $v = \sum_{i=1}^k a_i$, we see that $v$ has a factorization of length $k$. Hence $M$ is $k$-furcus.
\end{example}

\medskip
\subsection{Structure of Sets of Lengths}
\label{subsec:sets of lengths}

It was proved in \cite[Theorem~3.3]{CGG20} that for any $q \in \qq_{> 0}$ such that $\nn_0[q]$ is atomic, the set of lengths of every element of $\nn_0[q]$ is a (perhaps finite) arithmetic progression. In this subsection we will generalize this result to monoids $\nn_0[\alpha]$, where $\alpha$ is the positive $n$-th root of a positive rational.
\smallskip

For $q \in \qq_{> 0}$, define an \emph{irreducible root} of $q$ to be an $n$-th root of $q$, namely, $\alpha = \sqrt[n]{q}$ such that $\alpha, \alpha^2, \dots, \alpha^{n-1}$ are all irrational. The following lemma is a known result.

\begin{lemma} \cite[page 297]{sL02} \label{lem:linearly independency}
	Let $q$ be a positive rational, and let $n \ge 2$ such that $\sqrt[n]{q}$ is an irreducible $n$-th root of $q$. Then the polynomial $x^n - q$ is irreducible over $\qq$.
\end{lemma}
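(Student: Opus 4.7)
The plan is to argue by contrapositive: if $x^n - q$ factors nontrivially over $\mathbb{Q}$, then some power $\alpha^d$ with $1 \le d \le n-1$ must be rational, contradicting the hypothesis that $\sqrt[n]{q}$ is an irreducible $n$-th root of $q$. The main tool is the explicit description of the roots of $x^n - q$ in $\mathbb{C}$ together with the fact that $\alpha$ is real and positive.

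First I would write $\zeta := e^{2\pi i/n}$ and observe that the complete set of roots of $x^n - q$ in $\mathbb{C}$ is $\{\alpha \zeta^k : 0 \le k \le n-1\}$, where $\alpha = \sqrt[n]{q}$ is the distinguished positive real $n$-th root. Suppose, for the sake of contradiction, that $x^n - q = f(x) g(x)$ with $f, g \in \mathbb{Q}[x]$ monic and $d := \deg f$ satisfying $1 \le d \le n-1$. Then $f(x) = \prod_{k \in S}(x - \alpha \zeta^k)$ for some $S \subseteq \{0, 1, \dots, n-1\}$ with $|S| = d$. The constant term of $f$ therefore equals $(-1)^d \alpha^d \zeta^m$ for $m := \sum_{k \in S} k$, and this constant term must lie in $\mathbb{Q}$.

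Now comes the key observation: $\alpha^d$ is a positive real number, so $\alpha^d \zeta^m \in \mathbb{R}$ forces $\zeta^m \in \mathbb{R}$, i.e., $\zeta^m \in \{1, -1\}$. Consequently $\pm \alpha^d \in \mathbb{Q}$, and hence $\alpha^d \in \mathbb{Q}$. Since $1 \le d \le n-1$, this directly contradicts the assumption that each of $\alpha, \alpha^2, \dots, \alpha^{n-1}$ is irrational. Therefore $x^n - q$ admits no nontrivial factorization in $\mathbb{Q}[x]$ and is irreducible over $\mathbb{Q}$.

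I do not anticipate a real obstacle here: the argument is short and self-contained, and the only subtlety is recognizing that $\alpha$ being the \emph{positive} real $n$-th root forces any real multiple $\alpha^d \zeta^m \in \mathbb{Q}$ to already reveal rationality of $\alpha^d$. Alternatively, one could simply cite the classical Vahlen--Capelli criterion for the irreducibility of $x^n - a$, as the authors do by referring to \cite{sL02}, but the ad hoc argument above is sufficient and avoids invoking any machinery.
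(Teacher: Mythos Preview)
Your argument is correct. The paper does not actually prove this lemma: it is stated as a known result and attributed to Lang~\cite[page 297]{sL02} without further justification. Your elementary proof via the constant term of a hypothetical factor is the standard direct argument and is entirely valid; you have also correctly anticipated that the authors simply cite the result rather than proving it.
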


In order to establish the next proposition, we need the following consequence of Lemma~\ref{lem:linearly independency}.

\begin{cor} \label{cor:linearly indepencency}
	Let $q$ be a positive rational, and let $\alpha := \sqrt[n]{q}$ be a positive irreducible $n$-th root of $q$. Then $1,\alpha, \dots, \alpha^{n-1}$ are linearly independent over $\qq$.
\end{cor}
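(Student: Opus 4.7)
The plan is to derive the corollary directly from Lemma~\ref{lem:linearly independency} via standard field-theoretic reasoning. By that lemma, the polynomial $f(X) := X^n - q \in \qq[X]$ is irreducible over $\qq$, and since $\alpha = \sqrt[n]{q}$ is by assumption a root of $f$, the minimal polynomial $m_\alpha(X)$ of $\alpha$ over $\qq$ must equal $f(X)$ (up to the leading unit), so $\deg m_\alpha = n$.

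First, I would invoke the standard fact that $[\qq(\alpha) : \qq] = \deg m_\alpha(X) = n$, and that a $\qq$-basis of $\qq(\alpha)$ is given by $\{1, \alpha, \alpha^2, \ldots, \alpha^{n-1}\}$. Linear independence of these elements over $\qq$ is exactly the statement we need. Alternatively, one can argue directly: if there existed rationals $c_0, c_1, \ldots, c_{n-1}$, not all zero, such that
\[
c_0 + c_1 \alpha + \cdots + c_{n-1}\alpha^{n-1} = 0,
\]
then $g(X) := c_0 + c_1 X + \cdots + c_{n-1}X^{n-1}$ would be a nonzero polynomial in $\qq[X]$ of degree at most $n-1$ having $\alpha$ as a root. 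But then $m_\alpha(X) \mid g(X)$ in $\qq[X]$, which is impossible since $\deg m_\alpha = n > \deg g$.

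There is essentially no obstacle here beyond the clean appeal to Lemma~\ref{lem:linearly independency}; the corollary is just a reformulation of the irreducibility of $X^n - q$ in terms of a $\qq$-basis of $\qq(\alpha)$. I would keep the proof to a few lines, citing the lemma and either \cite[Proposition in Chapter V]{sL02} or stating the minimal-polynomial argument above explicitly for self-containment.
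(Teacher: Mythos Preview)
Your proof is correct and takes essentially the same approach as the paper: both derive the result directly from Lemma~\ref{lem:linearly independency} by noting that a nontrivial $\qq$-linear relation among $1,\alpha,\dots,\alpha^{n-1}$ would give a nonzero polynomial of degree less than $n$ with $\alpha$ as a root, contradicting the irreducibility of $X^n - q$. The paper's version is just a bit terser.
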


\begin{proof}
	It is a consequence of Lemma~\ref{lem:linearly independency}: indeed, if $1,\alpha, \dots, \alpha^{n-1}$ were linearly dependent over $\qq$, then the polynomial $x^n - q$ (whose unique positive real root is $\alpha$) would be reducible over $\qq$.
\end{proof}

In order to understand the atomicity and arithmetic of $\nn_0[\alpha]$ when $\alpha$ is a positive irreducible root of a positive rational, the following lemma will play a crucial role.

\begin{lemma} \label{lem:factorizations in quasi-rational cyclic semiring}
	Let $q$ be a non-integer positive rational that is not the reciprocal of any integer, and let $\alpha := \sqrt[n]{q}$ be the positive irreducible $n$-th root of $q$. Let $f \colon \mathsf{Z}(\nn_0[q])^n \to \mathsf{Z}(\nn_0[\alpha])$ be the map defined by $f(z_1, z_2, \ldots, z_n) = z_1 + \alpha z_2 + \alpha^2 z_3 + \cdots + \alpha^{n-1} z_n$. Then the following statements hold.
	\begin{enumerate}
		\item $f$ is a monoid isomorphism.
		\smallskip
		
		\item If $\pi(f(z_1, z_2, \dots, z_n)) = \pi(f(z_1' , z_2' , \dots, z_n' ))$, then $\pi(z_i) = \pi(z_i')$ for every $i \in \ldb 1, n \rdb$.
	\end{enumerate}
\end{lemma}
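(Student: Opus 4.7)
The plan is to identify both $\mathsf{Z}(\nn_0[q])^n$ and $\mathsf{Z}(\nn_0[\alpha])$ as free commutative monoids on natural index sets and then recognize $f$ as the isomorphism arising from $\alpha^n = q$ together with division with remainder modulo $n$; part~(2) will then be one line using Corollary~\ref{cor:linearly indepencency}.

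For part~(1), I would first record that the hypotheses on $q$ give $q^{-1} \notin \nn_{\ge 2}$, so the cited consequence of \cite[Section~5]{GG18} yields $\mathcal{A}(\nn_0[q]) = \{q^k : k \in \nn_0\}$, and the analogous statement $\mathcal{A}(\nn_0[\alpha]) = \{\alpha^m : m \in \nn_0\}$ can be invoked from \cite{CG22} (alternatively, prove it directly by writing any putative decomposition $\alpha^i = x + y$ in $\nn_0[\alpha]^\bullet$ in the normal form $\sum_{j=1}^{n} A_j \alpha^{j-1}$ with $A_j \in \nn_0[q]$ via $\alpha^n = q$, then applying Corollary~\ref{cor:linearly indepencency} to reduce to the irreducibility of each $q^k$ in $\nn_0[q]$). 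Once both atom sets are known, $\mathsf{Z}(\nn_0[q])^n$ is the free commutative monoid on the disjoint union $\bigsqcup_{j=1}^{n} \{q^k : k \in \nn_0\}$, while $\mathsf{Z}(\nn_0[\alpha])$ is the free commutative monoid on $\{\alpha^m : m \in \nn_0\}$. Division with remainder modulo $n$ gives a bijection $\ldb 1,n \rdb \times \nn_0 \to \nn_0$ via $(j,k) \mapsto (j-1) + nk$, which sends the basis element ``$q^k$ in slot $j$'' to $\alpha^{(j-1)+nk}$; this is precisely the action of $f$, since $\alpha^{j-1} q^k = \alpha^{(j-1)+nk}$. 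Hence $f$ is the unique monoid isomorphism extending this bijection of bases.

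For part~(2), applying $\pi$ and using $\pi(\alpha^{j-1} z_j) = \alpha^{j-1} \pi(z_j)$ yields
\[
	\pi\bigl(f(z_1, \dots, z_n)\bigr) = \sum_{j=1}^{n} \alpha^{j-1} \pi(z_j)
\]
in $\rr$, and symmetrically for the primed tuple. Subtracting and using $\pi(z_j), \pi(z_j') \in \nn_0[q] \subset \qq$, the hypothesis becomes $\sum_{j=1}^{n} \alpha^{j-1} \bigl( \pi(z_j) - \pi(z_j') \bigr) = 0$ with rational coefficients, whence Corollary~\ref{cor:linearly indepencency} forces $\pi(z_j) = \pi(z_j')$ for every $j \in \ldb 1,n \rdb$. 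The only nontrivial step throughout is the atom identification for $\nn_0[\alpha]$; once it is in hand, the remainder is bookkeeping plus the single application of linear independence already recorded in Corollary~\ref{cor:linearly indepencency}.
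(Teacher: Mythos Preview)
Your proof is correct and follows essentially the same route as the paper's: both hinge on the bijection between atoms given by division with remainder, $\alpha^{(j-1)+nk} \leftrightarrow (q^k$ in slot $j)$, for part~(1), and on a direct appeal to Corollary~\ref{cor:linearly indepencency} for part~(2). One small note: the paper actually establishes $\mathcal{A}(\nn_0[\alpha]) = \{\alpha^m : m \in \nn_0\}$ only in the subsequent Proposition~\ref{prop:atomicity and lengths of quasi-rational cyclic semirings}(1), \emph{using} this lemma, so your self-contained argument for it via Corollary~\ref{cor:linearly indepencency} is the safer choice here rather than a citation to~\cite{CG22}, whose hypotheses you would otherwise need to verify.
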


\begin{proof}
	(1) One can readily check that $f$ is additive and, therefore, it is a monoid homomorphism. To show that it is an isomorphism, we will show that every factorization in $\mathsf{Z}(\nn_0[\alpha])$ corresponds to a unique $n$-tuple of factorizations in $\mathsf{Z}(\nn_0[q])^n$, which argues both injectivity and surjectivity. Since $\nn_0[\alpha]$ is generated by the set $\{\alpha^k \mid k \in \nn_0\}$, its set of atoms consists of nonnegative powers of $\alpha$. Moreover, as $\alpha^n = q$, any atom $\alpha^{k'}$ can be written in the form $\alpha^r q^k$, where $k' = kn + r$ with $k \in \nn_0$ and $r \in \ldb 0, n-1 \rdb$. Thus, any factorization $z \in \mathsf{Z}(\nn_0[\alpha])$ can be written as
	\begin{equation} \label{eq:factorization lemma}
		z = \sum_{r=0}^{n-1} \sum_{k=0}^\infty c_{r,k} \alpha^r q^k =  \sum_{r=0}^{n-1} \alpha^r \bigg( \sum_{k=0}^\infty c_{r,k} q^k \bigg)
	\end{equation}
	for some coefficients $c_{r,k} \in \nn_0$. It follows from \cite[Section~5]{GG18} that the set of atoms of $\nn_0[q]$ is $\{ q^n \mid n \in \nn_0 \}$, from which one can deduce from~\eqref{eq:factorization lemma} that the unique $n$-tuple of factorizations in $\mathsf{Z}(\nn_0[q])^n$ that is sent to $z$ by $f$ is $(\sum_{k=0}^\infty c_{0,k} q^k, \sum_{k=0}^\infty c_{1,k} q^k, \dots, \sum_{k=0}^\infty c_{n-1,k} q^k)$. Thus, $f$ is a monoid isomorphism.
	\smallskip
	
	(2) The equality $\pi(f(z_1, z_2, \dots, z_n)) = \pi(f(z_1' ,z_2' , \dots, z_n' ))$ implies that
	\[
		\pi(z_1) + \alpha \pi(z_2) + \cdots + \alpha^{n-1} \pi(z_n) = \pi(z_1') + \alpha \pi(z_2')+ \cdots + \alpha^{n-1} \pi(z_n'),
	\]
	and so the linear independence in Corollary~\ref{cor:linearly indepencency} guarantees that $\pi(z_i) = \pi(z_i')$ for every $i \in \ldb 1, n \rdb$.
\end{proof}

\begin{prop} \label{prop:atomicity and lengths of quasi-rational cyclic semirings}
	Let $q$ be a non-integer positive rational that is not the reciprocal of any integer, and let $\alpha := \sqrt[n]{q}$ be a positive irreducible $n$-th root of $q$. Then the following statements hold.
	\begin{enumerate}
		\item $\nn_0[\alpha]$ is atomic with set of atoms $\mathcal{A}(\nn_0[\alpha]) = \{\alpha^k \mid k \in \nn_0\}$.
		\smallskip
		
		\item For any $b \in \nn_0[\alpha]^\bullet$, the set $\mathsf{L}(b)$ is an arithmetic progression with difference $|\mathsf{n}(q) - \mathsf{d}(q)|$.
	\end{enumerate}
\end{prop}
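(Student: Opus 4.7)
The plan is to use the decomposition $\nn_0[\alpha] \subseteq \nn_0[q] + \alpha\nn_0[q] + \cdots + \alpha^{n-1}\nn_0[q]$, made unique by the linear independence in Corollary~\ref{cor:linearly indepencency}, and then to invoke the already-known arithmetic of $\nn_0[q]$: because $q$ is neither an integer nor the reciprocal of an integer, \cite[Section~5]{GG18} yields $\mathcal{A}(\nn_0[q]) = \{q^k : k \in \nn_0\}$, and \cite[Theorem~3.3]{CGG20} yields that every set of lengths in $\nn_0[q]$ is an arithmetic progression with common difference $|\mathsf{n}(q) - \mathsf{d}(q)|$. These are the two facts I intend to lift to $\nn_0[\alpha]$.

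For (1), since $\nn_0[\alpha]$ is additively generated by $\{\alpha^k : k \in \nn_0\}$, one has $\mathcal{A}(\nn_0[\alpha]) \subseteq \{\alpha^k : k \in \nn_0\}$, so atomicity follows once each $\alpha^k$ is shown to be an atom. Suppose for contradiction $\alpha^k = u + v$ with $u, v \in \nn_0[\alpha]^\bullet$, and write $k = mn + r$ with $r \in \ldb 0, n-1 \rdb$. Grouping $\alpha^{jn+i-1} = \alpha^{i-1}q^j$ shows that every element of $\nn_0[\alpha]$ admits an expansion $\sum_{i=1}^{n} \alpha^{i-1}y_i$ with $y_i \in \nn_0[q]$, and by Corollary~\ref{cor:linearly indepencency} the $y_i$ are uniquely determined. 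Writing $u = \sum_i \alpha^{i-1}u_i$ and $v = \sum_i \alpha^{i-1}v_i$, uniqueness forces $u_i + v_i = 0$ for $i \neq r+1$ and $u_{r+1} + v_{r+1} = q^m$. Nonnegativity of $\nn_0[q] \subseteq \rr_{\ge 0}$ yields $u_i = v_i = 0$ for $i \neq r+1$, and since $q^m \in \mathcal{A}(\nn_0[q])$, the second equality forces $u_{r+1} = 0$ or $v_{r+1} = 0$, hence $u = 0$ or $v = 0$---a contradiction.

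For (2), fix $b \in \nn_0[\alpha]^\bullet$ and let $b = \sum_{i=1}^n \alpha^{i-1}b_i$ be its unique decomposition with $b_i \in \nn_0[q]$. Lemma~\ref{lem:factorizations in quasi-rational cyclic semiring} identifies $\mathsf{Z}(b)$ with the set of $f(z_1, \ldots, z_n)$ such that $z_i \in \mathsf{Z}(b_i)$: part~(2) of the lemma ensures the tuple $(b_1, \ldots, b_n)$ is the same across all factorizations of $b$, while part~(1) says every such tuple arises. Because the atoms of $f(z_1, \ldots, z_n)$ are the disjoint union over $i$ of the atoms of the $z_i$ (each $q^j$ in $z_i$ becoming $\alpha^{i-1+jn}$ in $\nn_0[\alpha]$), lengths add, giving
\[
    \mathsf{L}(b) \;=\; \mathsf{L}(b_1) + \mathsf{L}(b_2) + \cdots + \mathsf{L}(b_n)
\]
as subsets of $\nn_0$. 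By \cite[Theorem~3.3]{CGG20}, each $\mathsf{L}(b_i)$ is an arithmetic progression with common difference $d := |\mathsf{n}(q) - \mathsf{d}(q)|$ (a singleton $\{0\}$ when $b_i = 0$, which we regard as an AP with any difference). Since the sumset of finitely many (possibly infinite) arithmetic progressions sharing a common difference $d$ is again an AP with difference $d$, the conclusion follows.

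The crux of the whole argument is securing the tuple $(b_1, \ldots, b_n)$ attached to $b$ invariantly across factorizations---i.e., that factorizations of $b$ in $\nn_0[\alpha]$ decompose into genuinely independent factorizations of the $b_i$ in $\nn_0[q]$. This is exactly Lemma~\ref{lem:factorizations in quasi-rational cyclic semiring}(2), which in turn rests on the linear independence of $1, \alpha, \ldots, \alpha^{n-1}$ over $\qq$; without this rigidity, both the length-addition identity and the transfer of AP structure would collapse. Once it is available, the remaining steps---the atom classification, the length identity, and the sum-of-APs fact---amount to essentially formal bookkeeping.
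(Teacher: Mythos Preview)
Your proof is correct and follows essentially the same approach as the paper: both arguments reduce to the product decomposition $\mathsf{Z}(\nn_0[\alpha]) \cong \mathsf{Z}(\nn_0[q])^n$ of Lemma~\ref{lem:factorizations in quasi-rational cyclic semiring}, invoke \cite[Section~5]{GG18} for the atoms of $\nn_0[q]$ and \cite[Theorem~3.3]{CGG20} for its sets of lengths, and then observe that a sumset of arithmetic progressions with common difference $d$ is again such a progression. Your treatment of~(1) unpacks the isomorphism into an explicit coordinate argument rather than citing it directly, but the substance is identical.
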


\begin{proof}
	(1) Any element that is generated by at least two generating elements cannot be an atom, so $\mathcal{A}(\nn_0[\alpha]) \subseteq \{\alpha^k \mid k \in \nn_0\}$. Now fix $k' \in \nn_0$, and let us verify that $\alpha^{k'} \in \mathcal{A}(\nn_0[\alpha])$. To do so, write $k' = k n + r$ for some $k \in \nn_0$ and $r \in \ldb 0, n - 1 \rdb$. Hence $\alpha^{k'} = \alpha^r q^k$. Since the isomorphism $f$ of Lemma~\ref{lem:factorizations in quasi-rational cyclic semiring} takes atoms to atoms, the set of factorizations  of $\alpha^{k'}$ in $\nn_0[\alpha]$ is in bijection with the set of factorizations of $v := (0, 0, \ldots, 0, q^k, 0, \ldots, 0)$ in $\nn_0[q]^n$, where the only nonzero entry $q^k$ of $v$ occupies the $r$-th position. Since $q^k$ is an atom of $\nn_0[q]$, it follows that $\alpha^{k'}$ is an atom of $\nn_0[\alpha]$. Thus, $\mathcal{A}(\nn_0[\alpha]) = \{ \alpha^k \mid k \in \nn_0\}$.
	\smallskip
	
	(2) Let $b = t_0 + t_1 \alpha + t_2 \alpha^2 + \dots +t_{n-1} \alpha^{n-1}$, where $t_0, \dots, t_{n-1} \in \qq_{\ge 0}$. It follows from Lemma~\ref{lem:factorizations in quasi-rational cyclic semiring} that the set of factorizations of $b$  is in bijection with the set of factorizations of $T := (t_0, t_1, \dots, t_{n-1})$ in $\nn_0[q]^n$. The length of a certain factorization $(z_0, z_1, \dots, z_{n-1})$ of $T$ equals $\sum_{i=0}^{n-1} |z_i|$. Set $d := |\mathsf{d}(q) - \mathsf{n}(q)|$. It follows from \cite[Theorem~3.3]{CGG20} that, for each $i \in \ldb 0, n-1 \rdb$, the length of the factorization $z_i$ of $t_i$ can take any value in the set
	\[
		\bigg\{ \min \mathsf{L}(t_i) + dk \ \bigg{|} \ k \in \ldb 0, \frac{\max \mathsf{L}(t_i) - \min \mathsf{L}(t_i)}{d} \rdb \bigg\}.
	\] 
	Moreover, since the terms of the $n$-tuple are independent, $\min \mathsf{L}(T) = \sum_{i=0}^{n-1} \min \mathsf{L}(t_i)$. Thus, $|(z_0, z_1, \dots, z_{n-1})|$ can take any value in
	\[
		\bigg\{ \min \mathsf{L}(T) + dk \ \bigg{|} \ k \in \ldb 0, \frac{\max \mathsf{L}(T) - \min \mathsf{L}(T)}{d} \rdb \bigg\}.
	\]
	Hence, $\mathsf{L}(T)$ is an arithmetic progression with difference $d$ and it follows from Lemma~\ref{lem:factorizations in quasi-rational cyclic semiring} 
	that the same holds true for $\mathsf{L}(b)$.
\end{proof}

In general, this is not the case for $\nn_0[\alpha]$ when $\alpha$ is not rational. In the following example, we describe an atomic monoid $\nn_0[\alpha]$ for some positive algebraic~$\alpha$ containing an element whose set of lengths is finite but not an arithmetic progression.

\begin{example} \label{ex:a finite set of length not an arithmetic progression}
	Consider the polynomial $m(X) = X^4 - 6X^3 + 4X^2 - 2X - 2$. From Eisenstein's Criterion we know that $m(X)$ is irreducible in $\zz[X]$. As $m(0) = -2$ we deduce that $m$ has a positive root $\alpha$. Now consider the monoid $M := \nn_0[\alpha]$. Observe that
	\[
		m(X)(X^2 + X + 1) = X^6 - 5X^4 - X^4 - 4X^3 - 4X - 2,
	\]
	from which one can readily deduce that $M$ is an FGM. Consider for every $b \in \zz$ the polynomial $g_b(X) = m(X)(X + b)$ and observe that the coefficients $ -6b +4 $ and $4b - 2$ (corresponding to degrees $3$ and $2$, respectively) cannot be simultaneously negative. Hence we have that $\alpha^5 \notin \langle \alpha^0, \alpha^1, \dots,  \alpha ^ 4 \rangle$ and therefore $\mathcal{A}(M) = \{\alpha^i \mid i \in \ldb 0,5 \rdb \}$.
	
	Now consider the factorization $z := \alpha^5 + 6\alpha^2 \in \textsf{Z}(M)$ and let $x = \pi(z)$. We know that every other factorization of $x$ is obtained from $z + m(\alpha)r(\alpha)$ for some polynomial $r(X) \in \zz[X]$. Indeed, by taking $r(X) = -(X + 1)$ and $r(X) = -(X + 2)$, we see that
	\[
		z_1 = 5\alpha^4 + 2\alpha^3 + 4\alpha^2 + 4\alpha + 2 \quad \text{and} \quad z_2 = 4\alpha^4 + 8\alpha^3 + 6\alpha + 4
	\]
	are factorizations of $x$, respectively. Observe that $|z| = 7$, $|z_1| = 17$ and $|z_2| = 22$. Let us prove that $x$ does not have a factorization of length $12$, this will be enough to conclude that the set of lengths of $x$ is not an arithmetic progression. To do so, let $p(X) = X^5 + 6X^2$ and assume that there exists some $q(X) \in \nn_0[X]$ satisfying that $q(\alpha)$ is a factorization of length $12$ of $x$. Then for some $r \in \zz[X]$, the following equality holds:
	\[
		q(X) = m(X)r(X) + p(X).
	\]
	Taking into account that $q(1) = |q(\alpha)| = 12$, we can evaluate the previous equation at $X = 1$ to obtain that $r(1) = -1$. Hence $r(X) = aX - a - 1$ for some $a \in \zz$. If $a = -1$, then $q^{(3)}(0) = -4$ (where $q^{(n)}(0)$ is the coefficient corresponding to degree $n$ of $q$), which is not possible since all of the coefficients of $q$ belong to $\nn_0$. If $a < -1$, then $q^{(5)}(0) < 0$ and in other case $q^{(4)}(0) < 0$. In any case, we obtain a contradiction, and so $\mathsf{L}(x)$ is not an arithmetic progression. Moreover, since $M$ is an FGM, it is also a FFM, and thus $\mathsf{L}(x)$ is finite.
\end{example}

Let us recall the notion of an almost arithmetic progression. For $d \in \mathbb{N}$ and $N \in \mathbb{N}_0$, we say that $S \subseteq \mathbb{Z}$ is an \emph{almost arithmetic progression} (\emph{AAP}) with difference $d$ and bound $N$ if
\[
	S = c + \big( S' \cup S^* \cup S'' \big) \subseteq c + d \mathbb{Z},
\]
where $c \in \mathbb{Z}$ and $S^*$ is a nonempty arithmetic progression with difference $d$ such that $\min S^* = 0$ while $S' \subseteq \{ -N, \ldots, -1 \}$ and $S'' \subseteq \sup S^* +  \{ 1, \dots, N \}$ (we assume that $S''$ is empty provided that $S^*$ is infinite). Monoids $M$ for which there exist $d \in \nn$ and $N \in \nn_0$ such that every set of length of $M$ is an AAP with difference $d$ and bound $N$ are said to satisfy the \emph{structure property for sets of lengths}. Observe that every arithmetic progression is an AAP. We have seen in part~(2) of Proposition~\ref{prop:atomicity and lengths of quasi-rational cyclic semirings} a class of monoids of the form $\nn_0[\alpha]$ whose members satisfy the structure property for sets of lengths. Under certain reasonable conditions, strongly primary monoids satisfy the structure property for sets of lengths (see~\cite[Theorem~4.3.6]{GH06} for more details).

In sharp contrast with Proposition~\ref{prop:atomicity and lengths of quasi-rational cyclic semirings} and Example~\ref{ex:a finite set of length not an arithmetic progression}, in the next example we exhibit an atomic monoid $\nn_0[\alpha]$ (for a positive algebraic number $\alpha$) which does not satisfy the structure property for sets of lengths.

\begin{example} \label{ex:set of lengths that is not an AAP}

Consider the polynomial $p(X) = X^4 - X^3 - X^2 - X + 1$. One can verify that the polynomial $p(X)$ has precisely two real roots, namely, $\alpha$ and $\beta$ with $\alpha \approx 1.722$ and $\beta \approx 0.581$. We want to prove that we cannot choose $d \in \nn$ and $N \in \nn_0$ such that the set of lengths of each element in $M := \mathbb{N}_0[\alpha]$ is an AAP.

Let us argue first that $\mathcal{A}(M) = \{\alpha^n \mid n \in \mathbb{N}_0\}$. Since $\alpha > 1$, we see that $0$ is not a limit point of $M^\bullet$, which implies that $M$ is a BFM; this is \cite[Theorem~4.5]{fG19} (indeed, $M$ is an FFM via \cite[Theorem~5.6]{fG19} or \cite[Theorem~4.11]{CG22}). The monoid $\mathbb{N}_0[\beta]$ is isomorphic to $M$ by \cite[Lemma~3.1]{CG22} and, therefore, atomic. From $\beta < 1$, we can deduce that $\mathcal{A}(\mathbb{N}_0[\beta]) = \{ \beta^n \mid n \in \mathbb{N}_0 \}$ (see \cite[Theorem~4.1]{CG22}). Thus, $|\mathcal{A}(M)| = |\mathcal{A}(\mathbb{N}_0[\beta])| = \infty$, and so it follows from \cite[Theorem~4.1]{CG22} that $\mathcal{A}(M) = \{\alpha^n \mid n \in \nn_0\}$.

Now suppose, by way of contradiction, that there exist $d \in \nn$ and $N \in \nn_0$ such that for every element $x \in M$ the set $\mathsf{L}(x)$ is an AAP with difference $d$ and bound $N$. Then, for each $x \in M$, the set $\mathsf{L}(x)$ satisfies the following property: if $\lvert \mathsf{L}(x) \rvert \ge 2$, then for any $\ell \in \mathsf{L}(x)$, there exists $\ell' \in \mathsf{L}(x) \setminus \{\ell\}$ such that $\lvert \ell - \ell' \rvert \le \max(d, N)$. For each $k \in \nn$, set $y_k := \alpha^{3k + 1} + 1 \in M$, and let us prove the following claim.
\smallskip

\noindent \textit{Claim.} $y_k = \alpha^{3k} + \alpha + \sum_{i = 1}^k \alpha^{3i - 1}$ for every $k \in \nn$.
\smallskip

\noindent \textit{Proof of Claim.} We use induction on $k$. Observe that when $k = 1$, 
\[
	 \alpha^{3k} + \alpha + \sum_{i = 1}^k \alpha^{3i - 1} = \alpha^3 + \alpha + \alpha^2 = \alpha^4 + 1 = y_1
\]
because $\alpha$ is a root of $p(X)$. Now assume that $k \ge 2$ and that $y_j = \alpha^{3j} + \alpha + \sum_{i = 1}^j \alpha^{3i - 1}$ for every $j \in \ldb 1, k-1 \rdb$. Observe that
\begin{align*}
	y_k & = y_{k - 1} + \alpha^{3k + 1} - \alpha^{3k - 2}  = \alpha^{3k - 3} + \alpha + \alpha^{3k + 1} - \alpha^{3k - 2} + \sum_{i = 1}^{k - 1} \alpha^{3i - 1} \\
	& = (\alpha^{3k + 1} - \alpha^{3k} - \alpha^{3k - 1} - \alpha^{3k - 2} + \alpha^{3k - 3}) + \alpha^{3k} + \alpha + \sum_{i = 1}^k \alpha^{3i - 1} \\
	& = \alpha^{3k} + \alpha + \sum_{i = 1}^k \alpha^{3i - 1}.
\end{align*}
As a result, our claim follows from induction.
\smallskip

Then the equality $\mathcal{A}(M) = \{\alpha^n \mid n \in \nn_0\}$, in tandem with the previous claim, guarantee that, for each $k \in \nn$, the element $y_k$ has factorizations of length $2$ and $k+2$, namely, those determined by the expressions  $\alpha^{3k + 1} + 1$ and $\alpha^{3k} + \alpha + \sum_{i = 1}^k \alpha^{3i - 1}$. Thus, $\{2, k + 2\} \subseteq \mathsf{L}(y_k)$ and, in particular, $\lvert \mathsf{L}(y_k) \rvert \ge 2$. Now set $D := \max(d, N) + 2$. Since $1 \not\in \mathsf{L}(y_k)$, from our assumption we can deduce that for any $k \in \nn$ there exists $\ell_k \in \ldb 3, D \rdb$ with $\ell_k \in \mathsf{L}(y_k)$.

Finally, consider the set $A := \{0\} \cup \{\alpha^n \mid n < 0 \}$. Notice that $A$ is closed in $\mathbb{R}_{\ge 0}$ with respect to the Euclidean topology. Therefore it follows from Lemma~\ref{lem:sum of closed is closed} that the set $D \cdot A := A + A + \cdots + A$ ($D$ times) is also closed under the Euclidean topology. We can write $\alpha^{3k + 1} + 1 = \sum_{i = 1}^{\ell_k} \alpha^{m_i}$ for some $m_1, \dots, m_{\ell_k} \in \ldb 0, 3k \rdb$. As a consequence,
\[
	1 + \alpha^{-3k - 1} = \sum_{i = 1}^{\ell_k} \alpha^{m_i - 3k - 1} \in D \cdot A.
\]
As the sequence $(\alpha^{-(3n+1)})_{n \in \nn}$ converges to zero, we see that $1$ is a limit point of $D \cdot A$ and, as $D \cdot A$ is closed with respect to the Euclidean topology, $1 \in D \cdot A$. However, this contradicts the fact that $\mathcal{A}(M) = \{\alpha^n \mid n \in \mathbb{N}_0\}$.
\end{example}

\bigskip
\section{Sets of Betti Elements}
\label{sec:Betti elements}

In this section we study the sets of Betti elements of evaluation polynomial semirings. It turns out that a monoid of the form $\nn_0[\alpha]$ is finitely generated if and only if it is atomic and has finitely many Betti elements. Let us begin by proving this result.

\begin{prop} \label{prop:fg iff finitely many Bettis}
	Let $\alpha $ be a positive algebraic number such that $\nn_0[\alpha]$ is atomic. Then the following conditions are equivalent.
	\begin{enumerate}
		\item[(a)] $\nn_0[\alpha]$ is finitely generated.
		\smallskip
		
		\item[(b)] $\nn_0[\alpha]$ has finitely many Betti elements.
	\end{enumerate}
\end{prop}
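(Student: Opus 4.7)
The proposition has two directions, which I address separately.

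For $(a) \Rightarrow (b)$, I appeal to the classical result that every finitely generated cancellative commutative monoid has only finitely many Betti elements. Since $\alpha$ is algebraic, $\gp(\nn_0[\alpha]) = \zz[\alpha]$ is a finitely generated free abelian group; if in addition $\nn_0[\alpha]$ is finitely generated as a monoid, then by R\'edei's theorem the kernel congruence of the canonical projection $\nn_0^{\mathcal A(M)} \twoheadrightarrow M_{\mathrm{red}}$ is finitely generated, and the Betti elements of $M$ arise at the (finite) set of images of minimal generators of this congruence.

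For $(b) \Rightarrow (a)$, I argue by contraposition. Assume $M := \nn_0[\alpha]$ is atomic but not finitely generated. Then $|\mathcal A(M)| = \infty$, and by \cite[Theorem~4.1]{CG22} (combined with atomicity) $\mathcal A(M) = \{\alpha^n : n \in \nn_0\}$, so in particular $\alpha \neq 1$. The plan is to produce infinitely many Betti elements. Let $(p_\alpha, q_\alpha)$ be the minimal pair of $\alpha$, set $d := \deg p_\alpha$, and for each $k \in \nn_0$ define
\[
	b_k := \alpha^k p_\alpha(\alpha) = \alpha^k q_\alpha(\alpha).
\]
The elements $b_k$ are pairwise distinct since $\alpha \neq 1$, and each admits the natural factorizations $z_k^+ := X^k p_\alpha(X)$ and $z_k^- := X^k q_\alpha(X)$, whose supports $k + \supp p_\alpha$ and $k + \supp q_\alpha$ are disjoint by the defining property of the minimal pair.

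The core step is to show that each $b_k$ is a Betti element via an isolated-atom analysis of $\alpha^{N_k}$, where $N_k := k + d$ is the largest exponent appearing in $z_k^+$. Any factorization $f$ of $b_k$ with positive coefficient at $X^{N_k}$ can be written as $f = cX^{N_k} + g$ with $c \geq 1$, $g \in \nn_0[X]$, and $g_{N_k} = 0$. The nonnegativity of $g$ bounds $c$ by the leading coefficient $\ell$ of $p_\alpha$, while $g(\alpha) = b_k - c\alpha^{N_k}$ forces $g$ to be a factorization of $\alpha^k p_1(\alpha)$ (with $p_1 := p_\alpha - \ell X^d$) that avoids the atom $\alpha^{N_k}$. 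Using that all atoms of $M$ are powers of $\alpha$, a careful induction on degree/coefficient sums, together with the relations coming from $m_\alpha$, shows that the only such $g$ is $X^k p_1(X)$, whence $f = z_k^+$. Thus $z_k^+$ is an isolated vertex in the Betti graph of $b_k$; since $z_k^-$ provides a second factorization, the Betti graph is disconnected and $b_k$ is a Betti element.

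\textbf{Main obstacle.} The delicate step is the induction controlling the alternative factorizations of $\alpha^k p_1(\alpha)$ that avoid $\alpha^{N_k}$: depending on the structure of $p_1(X)$, such auxiliary elements can in principle admit further factorizations, and ruling them out requires careful use of the minimality of $(p_\alpha, q_\alpha)$ together with degree/coefficient bounds that will differ for $\alpha > 1$ and $\alpha < 1$. Once the isolated-vertex claim is secured, the infinite family $\{b_k : k \in \nn_0\}$ yields infinitely many distinct Betti elements, contradicting (b) and completing the proof.
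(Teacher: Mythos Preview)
Your treatment of $(a)\Rightarrow(b)$ via R\'edei's theorem matches the paper exactly.

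For $(b)\Rightarrow(a)$ you share the paper's overall strategy (exhibit one Betti element and translate by powers of~$\alpha$), but your method for certifying that $b_k$ is a Betti element has a genuine gap. You aim to prove that $z_k^+$ is isolated in $\nabla_{b_k}$ by showing that the only factorization of $b_k$ containing the atom $\alpha^{N_k}$ is $z_k^+$ itself. There are two problems. First, this is not sufficient for isolation whenever $p_\alpha$ is not a monomial: $z_k^+$ then also contains atoms $\alpha^{k+j}$ for $j\in\supp\,p_\alpha\setminus\{d\}$, and a second factorization could meet $z_k^+$ at one of those without using $\alpha^{N_k}$ at all. Second, the ``careful induction'' pinning down $g$ is not carried out---you yourself flag it as the main obstacle---and even the preliminary bound $c\le\ell$ is unjustified: writing $cX^{N_k}+g(X)-X^kp_\alpha(X)=(p_\alpha-q_\alpha)(X)\,r(X)$ with $r\in\zz[X]$ and comparing coefficients at $X^{N_k}$ gives (when $\deg r\le k$) only $c=\ell(1+r_k)$, which places no upper bound on $c$ without further control of $r$, and gives nothing when $\deg g>N_k$.

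The paper sidesteps all of this by not committing to any specific candidate. It sets
\[
	m \;:=\; \min\bigl\{\,|z| : z\in\mathsf{Z}(x)\text{ for some }x\in M\text{ with }|\mathsf{Z}(x)|\ge 2\,\bigr\},
\]
picks any such $x$ together with a factorization $z\in\mathsf{Z}(x)$ of length~$m$, and observes that $z$ is \emph{automatically} isolated in $\nabla_x$: if $z'\ne z$ had $\gcd(z,z')\ne 0$, then $z-\gcd(z,z')$ and $z'-\gcd(z,z')$ would be two distinct factorizations of a single element, the first of length strictly below~$m$, contradicting minimality. Since $\mathcal{A}(M)=\{\alpha^n:n\in\nn_0\}$, the map $w\mapsto\alpha^n w$ sends $\mathsf{Z}(x)$ into $\mathsf{Z}(\alpha^n x)$ and preserves lengths, so $\alpha^n x$ again has a length-$m$ factorization and at least two factorizations; the identical minimality argument makes that length-$m$ factorization isolated, and hence every $\alpha^n x$ is a Betti element. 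No information about the shape of $(p_\alpha,q_\alpha)$ is used beyond the fact that it supplies one element with two factorizations, ensuring that $m$ exists. This minimality trick is exactly the missing idea that replaces your unfinished induction.
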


\begin{proof}
	(a) $\Rightarrow$ (b): If $\nn_0[\alpha]$ is finitely generated, then it is finitely presented by Redei's theorem and, therefore, it has finitely many Betti elements.
	\smallskip
	
	(b) $\Rightarrow$ (a): Set $M := \nn_0[\alpha]$. Assume that $M$ is not finitely generated, and let us argue that~$M$ has infinitely many Betti elements. Because $M$ is atomic and non-finitely generated, it follows from \cite[Theorem~4.2]{CG22} that $\mathcal{A}(M) = \{ \alpha^n \mid n \in \nn_0\}$. Let $(p(x),q(x))$ be the minimal pair of $\alpha$. Consider the set
	\[
		L = \{\ell \in \nn \mid \text{there exists }  x \in \nn_0[\alpha] \text{ with } \ell \in \mathsf{L}(x) \text{ and } |\mathsf{Z}(x)| \ge 2 \}.
	\]
	Observe that $L$ is nonempty because $p(\alpha)$ and $q(\alpha)$ determine distinct factorizations of the same element of $M$, and so $p(1) \in L$. Set $m :=  \min L$. Now take $x \in M$ such that $m \in \mathsf{L}(x)$ and $|\mathsf{Z}(x)| \geq 2$, and then take $z \in \mathsf{Z}(x)$ with $|z| = m$. We claim that $z$ is an isolated vertex in $\nabla_x$. Suppose, to the contrary, that this is not the case. Then there exists $z' \in \mathsf{Z}(x)$ with $z' \neq z$ such that $\gcd(z,z')$ consists of at least one atom. Take $y \in M$ such that $w := z - \gcd(z,z')$ and $w' := z' - \gcd(z,z')$ both belong to $\mathsf{Z}(y)$. Since $w \neq w'$, we see that $\vert z-\gcd(z,z')\vert \in L$ and $|z - \gcd(z,z')| < m$, which contradicts the minimality of $m$ in~$L$. Hence $z$ is an isolated vertex in $\nabla_x$, as claimed. Since $|\mathsf{Z}(x)| \ge 2$, this implies that~$x$ is a Betti element of $M$.
	
	Finally, consider the sequence $(x_n)_{n \in \nn}$, where $x_n :=\alpha^n x \in M$ for every $n \in \nn$. Since $\mathcal{A}(M) = \{ \alpha^n \mid n \in \nn_0\}$, for each factorization $z$ of $x$, we see that the $\alpha^n z$ determines a factorization of $x_n$ of length $|z|$. Therefore $m \in \mathsf{L}(x_n)$ and $\mathsf{Z}(x_n) \ge 2$ for all $n \in \nn$. Proceeding as we did in the previous paragraph, we can show that the graph $\nabla_{x_n}$ must contain an isolated vertex. Therefore $x_n$ is a Betti element of $M$ for every $n \in \nn$. Thus, $M$ has infinitely many Betti elements.
\end{proof}

Although the implication (a) $\Rightarrow$ (b) in Proposition~\ref{prop:fg iff finitely many Bettis} holds for any monoid, this is not the case for the implication (b) $\Rightarrow$ (a). The following example illustrates this observation.

\begin{example}
	Consider the Puiseux monoid $M = \langle \frac{1}{p} \mid p \in \pp \rangle$. It is well known and not hard to verify that $M$ is atomic with $\mathcal{A}(M) = \{ \frac{1}{p} \mid p \in \pp \}$ (see \cite{GG18}). In addition, one can  see that an element $q \in M$ has a unique factorization if and only if $1 \nmid_M q$. Take $x \in M$ with $x > 1$ such  that $1 \mid_M x$. Then the element $y := x-1$ belongs to $M$. Fix $a_1, \dots, a_\ell \in \mathcal{A}(M)$ such that $a_1 + \dots + a_\ell \in \mathsf{Z}(y)$. Now, consider two factorizations $z_1, z_2 \in \mathsf{Z}(x)$, and pick $\frac{1}{p_1}, \frac{1}{p_2} \in \mathcal{A}(M)$ such that $\frac{1}{p_1}$ and $\frac{1}{p_2}$ appear in $z_1$ and  $z_2$, respectively. Then
	\[
		z_1' := p_1 \cdot \frac{1}{p_1} + a_1 + \dots + a_\ell \quad \text{ and } \quad z_2' := p_2 \cdot \frac{1}{p_2} + a_1 + \dots + a_\ell
	\]
	both belong to $\mathsf{Z}(x)$, and so $z_1, z_1', z_2', z_2$ is a path in $\nabla_x$. This ensures that $z_1$ and $z_2$ are in the same connected component in $\nabla_x$. As a consequence, the graph $\nabla_x$ is connected. Finally, note that $\mathsf{Z(1)} = \{ p\cdot \frac{1}{p} \mid p \in \mathbb{P}\}$. Then $\nabla_1$ is a graph with infinitely many isolated vertices. Thus, $1$ is the only Betti element of $M$.
\end{example}

Our next goal is to determine the Betti elements of $\nn_0[\alpha]$, where $\alpha$ is a positive irreducible $n$-th root of a positive rational. In the next lemma we tackle the case where $n=1$, and we resolve the general case in the subsequent proposition. Let $\text{Betti}(M)$ denote the set of Betti elements of a given atomic monoid $M$.

\begin{lemma} \label{lem:Betti element in the rational case}
	Let $q$ be a non-integer positive rational such that $\nn_0[q]$ is atomic. Then the set of Betti elements of $\nn_0[q]$ is
	\[
		\emph{Betti}(\nn_0[q]) = \bigg\{ \frac{\mathsf{n}(q)^{m+1}}{\mathsf{d}(q)^m} \ \Big{|} \ m \in \nn_0 \bigg\}.
	\]
\end{lemma}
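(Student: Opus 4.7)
The plan is to prove both containments. For the forward inclusion—that each $b_m := n^{m+1}/d^m = n q^m$ (writing $n := \mathsf{n}(q)$ and $d := \mathsf{d}(q)$) is a Betti element—I will establish a \emph{support dichotomy}: every factorization $z = \sum_j c_j q^j$ of $b_m$ has $\supp z \subseteq \ldb 0, m \rdb$ or $\supp z \subseteq \ldb m+1, \infty \rdb$. Given this, the two factorizations $n q^m$ and $d q^{m+1}$ (equal in $M$ via $d q^{m+1} = n q^m$) witness that both classes are nonempty, and since factorizations drawn from opposite classes use disjoint sets of atoms, no edge of $\nabla_{b_m}$ crosses the partition, whence $\nabla_{b_m}$ is disconnected.

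To prove the dichotomy, suppose for contradiction that $z \in \mathsf{Z}(b_m)$ has $i := \min \supp z \le m$ and $k := \max \supp z \ge m+1$. Dividing $\sum c_j q^j = n q^m$ by $q^i$, multiplying by $d^{k-i}$, and substituting $q = n/d$ yields the integer identity $\sum_j c_j \, n^{j-i} d^{k-j} = n^{m-i+1} d^{k-m}$. Reducing mod $n$ kills all $j > i$ summands on the left, leaving $c_i d^{k-i} \equiv 0 \pmod n$; since $\gcd(n, d) = 1$, this forces $n \mid c_i$. Reducing mod $d$ analogously gives $d \mid c_k$. Thus $c_i \ge n$ and $c_k \ge d$, so $b_m = \pi(z) \ge n q^i + d q^k = n(q^i + q^{k-1})$, the last equality using the fundamental trade $d q^k = n q^{k-1}$. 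Dividing by $n$ gives $q^m \ge q^i + q^{k-1}$; a case split on the sign of $q - 1$ (use $q^{k-1} \ge q^m$ when $q > 1$, or $q^i \ge q^m$ when $q < 1$, together with the other strictly positive summand) yields the reverse strict inequality, a contradiction.

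For the converse I invoke \emph{trade connectivity}: any two factorizations of a given $x \in \nn_0[q]$ are linked by a sequence of elementary trades $n q^j \leftrightarrow d q^{j+1}$ through valid (nonnegative-coefficient) factorizations. Given this, if $\nabla_x$ is disconnected then a trade-path linking factorizations in two distinct components of $\nabla_x$ must contain a consecutive pair $w, w'$ that themselves lie in distinct components. An atom-count for a single trade shows that $w$ and $w'$ share no atom precisely when the trade consumes the entire support of $w$, which forces $w = n q^j$ and $w' = d q^{j+1}$ for some $j \in \nn_0$; hence $x = \pi(w) = b_j$. Trade connectivity itself I plan to prove by induction on $|z_1|+|z_2|$: after reducing by $\gcd(z_1,z_2)$ when nonzero, the disjoint-support case uses the same mod-$n$ and mod-$d$ divisibilities from the previous paragraph to exhibit a length-reducing trade at an extremal support index—a forward trade at the smaller $\min\supp$ when $q>1$, a reverse trade at the larger $\max\supp$ when $q<1$. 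Verifying that this length-reducing trade is always available in the disjoint-support regime is the single genuinely delicate step of the argument.
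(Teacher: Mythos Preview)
Your argument is correct, and it differs from the paper's in both directions. For the inclusion $\{b_m\}\subseteq\text{Betti}(\nn_0[q])$, the paper first proves an auxiliary claim (every element with multiple factorizations has all factorization lengths $\ge\min(n,d)$) and then uses it to show that the shorter of the two canonical factorizations $n\,q^m$, $d\,q^{m+1}$ is an \emph{isolated} vertex of $\nabla_{b_m}$. Your support dichotomy is a sharper structural statement: it bipartitions \emph{all} of $\mathsf{Z}(b_m)$ into two nonempty classes with no edges between them, and your divisibility-plus-magnitude contradiction is clean and self-contained. For the reverse inclusion, the paper runs a minimal-counterexample argument on $t=\max(\deg p,\deg p')$ over disconnected pairs, applying a single trade at the top degree to contradict minimality; your route instead establishes global \emph{trade connectivity} by induction on $|z_1|+|z_2|$ and then reads off the Betti elements from the unique configuration in which a single trade severs all common atoms. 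The two converse arguments are close in spirit (both hinge on the elementary relation $n\,q^j=d\,q^{j+1}$), but yours packages the mechanism as a reusable lemma, while the paper's is slightly shorter because it is tailored to the Betti conclusion. Your inductive step is sound: in the disjoint-support regime the mod-$n$/mod-$d$ reductions indeed give $n\mid c_i$ and $d\mid c_k$ at the extremal indices, and since the supports are disjoint and both nonzero one has $k>i\ge0$, so the length-reducing trade (forward at $i$ when $q>1$, reverse at $k$ when $q<1$) is always available.
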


\begin{proof}
	Set $n := \mathsf{n}(q)$ and $d := \mathsf{d}(q)$. We first argue the following claim.
	\smallskip
	
	\noindent \textit{Claim.} If an element of $\nn_0[q]$ has more than one factorization, then all factorizations of that element have length at least $\min(d, n)$.
	\smallskip
	
	\noindent \textit{Proof of Claim.} Let $p(X)$ and $p'(X)$ be two polynomials in $\nn_0[X]$ such that the formal evaluations $z := p(q)$ and $z' := p'(q)$ yield two distinct factorizations of the number $p(q)$. Then, the polynomial $p(X) - p'(X) \in \mathbb{Z}[X]$ has $q$ as a root, so it can be written as $(dX-n)r(X)$ for some nonzero $r(X) \in \mathbb{Z}[X]$. Assume, without loss of generality, that $p(X) - p'(X)$ has positive leading coefficient, so $r(X)$ does as well. Then, the leading coefficient of $(dX-n)r(X)$ is at least $d$, which implies that the number of copies of that power of $q$ in the factorization $z$ has to be at least $d$. Hence $|z| \geq d$. Furthermore, there must be a smallest nonnegative integer $j$ such that $r^{(j)}(0) > 0$; that is, the coefficient of $X^j$ in $r(X)$ is positive. As a result, the coefficient of $X^j$ in $(dX-n)r(X)$ gets a contribution of at most $0$ from the $dX$ and a contribution of at most $-n$ from the $-n$, so the coefficient of $X^j$ in $(dX-n)r(X)$ is at most $-n$. This implies that $z'$ must contain at least $n$ copies of $q^j$ and, therefore, $|z'| \geq n$. Now the fact that both $d$ and $n$ are at least $\min(d, n)$ ensures that $\min(|z|, |z'|) \geq \min(d, n)$. Finally, since $z$ and $z'$ are arbitrary factorizations of $p(q)$, each factorization of $p(q)$ has length at least $\min(d, n)$, which establishes our claim.
	\smallskip

    We then show that all Betti elements must have the form $\frac{n^{m + 1}}{d^m}$. Suppose, by way of contradiction, that there exists $r \in \nn_0[q]^\bullet$ with $r \neq \frac{n^{m+1}}{d^m}$ for any $m \in \nn_0$ such that $r$ has at least two distinct factorizations $z, z' \in \mathsf{Z}(r)$ in $\nn_0[q]$ that are disconnected in the Betti graph $\nabla_r$, and then take polynomials $p(X), p'(X) \in \nn_0[X]$ whose formal evaluations $p(q)$ and $p'(q)$ yield the factorizations $z$ and $z'$, respectively. Furthermore, we can take $z$ and $z'$ minimizing $t = \max(\deg p(X), \deg p'(X))$ and, among all such factorizations $z$ and $z'$ minimizing $t$, we can further assume that we have chosen $z$ and $z'$ so that the sum of their coefficients corresponding to the term $X^t$ is as small as it could possibly be. Now write $p(X) - p'(X) = (d X - n) r(X)$ where $r \in \zz[X]$. Then we see that the coefficient of the leading term of $p(X) - p'(X)$ is a multiple of $d$. Because $z$ and $z'$ is disconnected in $\nabla_r$, without loss of generality we can assume that $p(X) = d s X^t + p_0(X)$ with $s \in \nn$ and $\max( \deg p_0(X), \deg p'(X)) < t$. If $p_0 = 0$ and $s = 1$, then $r = \frac{n^t}{d^{t - 1}}$, which contradicts our assumption. Therefore $z$ and $z'' := d(s - 1) q^t + n q^{t - 1} + p_0(q)$ are connected in $\nabla_r$ and, by assumption, $z''$ and $z'$ are connected in $\nabla_r$. As a result, $z$ and $z'$ are connected in $\nabla_r$, which is a contradiction.
    \smallskip
	
	Now we will show that each $r = \frac{n^{m + 1}}{d^m}$ is a Betti element by splitting the rest of the proof into two cases.
	\smallskip
	
	\noindent \textit{Case 1:} $d < n$. For each $m \in \nn_0$ we see that $\frac{n^{m+1}}{d^m} \in \mathbb{N}_0[q]$ and also that the same element has at least two factorizations, namely, $n \cdot q^m$ and $d \cdot q^{m+1}$. We claim that $d \cdot q^{m + 1}$ is an isolated vertex in the Betti graph of $\frac{n^{m+1}}{d^m}$.

	Suppose, by way of contradiction, that there was another factorization of $\frac{n^{m+1}}{d^m}$ that had a nonzero greatest common divisor with $d \cdot q^{m+1}$. Observe that this factorization would have at least one copy of the atom $q^{m+1}$, so if we subtracted $q^{m+1}$ from it, we would obtain a second factorization of the number $(d-1) \cdot q^{m+1}$. However, this violates the statement of our claim: we would have an element of $\mathbb{N}_0[q]$ with multiple factorizations that has a factorization whose length is less than $d = \min(d, n)$. Thus, $d \cdot q^{m+1}$ is an isolated vertex in the Betti graph of $\frac{n^{m+1}}{d^m}$ for each $m \in \nn_0$, and so $\big\{ \frac{\mathsf{n}(q)^{m+1}}{\mathsf{d}(q)^m} \ \big{|} \ m \in \nn_0 \big\} \subseteq \text{Betti}(\nn_0[q])$.
 
	\smallskip
	
	\noindent \textit{Case 2:} $n < d$. The argument for this case will be very similar, but a few details are different. For $m \in \nn_0$, observe that $\frac{n^{m+1}}{d^m} \in \mathbb{N}_0[q]$ and also that the same element has at least two factorizations in $\nn_0[q]$, namely, $n \cdot q^m$ and $d \cdot q^{m+1}$. We claim that $n \cdot q^m$ is an isolated vertex in the Betti graph of $\frac{n^{m+1}}{d^m}$.

	Suppose, for the sake of a contradiction, that there was another factorization of $\frac{n^{m+1}}{d^m}$ that had a nonzero greatest common divisor with $n \cdot q^m$. This factorization would have at least one copy of the atom $q^m$, so if we subtracted $q^m$ out, we would obtain a second factorization of the element $(n-1) \cdot q^m$. This violates the statement of our initial claim because an element of $\mathbb{N}_0[q]$ with more than one factorization has a factorization whose length is less than $n = \min(d, n)$. Therefore $n \cdot q^m$ is an isolated vertex in the Betti graph of $\frac{n^{m+1}}{d^m}$ for each $m \in \nn_0$. Thus, $\big\{ \frac{\mathsf{n}(q)^{m+1}}{\mathsf{d}(q)^m} \ \big{|} \ m \in \nn_0 \big\} \subseteq \text{Betti}(\nn_0[q])$.
\end{proof}

We are in a position now to determine the Betti elements of $\nn_0[\alpha]$, where $\alpha$ is a positive irreducible $n$-th root of a positive rational.

\begin{prop}
	Let $q$ be a positive rational that is neither an integer nor the reciprocal of an integer, and let $\alpha := \sqrt[n]{q}$ be a positive irreducible $n$-th root of $q$. Then the set of Betti elements of $\nn_0[\alpha]$ is
	\[
		\bigg\{ \frac{\mathsf{n}(q)^{m+1}}{\mathsf{d}(q)^m} \alpha^r \ \Big{|} \ m \in \nn_0 \text{ and } r \in \ldb 0, n-1 \rdb \bigg\}.
	\]
\end{prop}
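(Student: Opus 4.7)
The plan is to combine the monoid isomorphism $f \colon \mathsf{Z}(\nn_0[q])^n \to \mathsf{Z}(\nn_0[\alpha])$ of Lemma~\ref{lem:factorizations in quasi-rational cyclic semiring} with the rational-case description in Lemma~\ref{lem:Betti element in the rational case} to reduce the Betti-element analysis of $\nn_0[\alpha]$ to a product analysis over $\nn_0[q]$. By Corollary~\ref{cor:linearly indepencency} every $b \in \nn_0[\alpha]$ has a unique representation $b = \sum_{r=0}^{n-1} \alpha^r t_r$ with $t_r \in \qq_{\ge 0}$, and part~(2) of Lemma~\ref{lem:factorizations in quasi-rational cyclic semiring} guarantees that in fact $t_r \in \nn_0[q]$ for every $r$. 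Under $f$, the set $\mathsf{Z}(b)$ is in bijection with the product $\mathsf{Z}(t_0) \times \cdots \times \mathsf{Z}(t_{n-1})$; and since the atoms of $\nn_0[\alpha]$ are the elements $\alpha^r q^k$ by Proposition~\ref{prop:atomicity and lengths of quasi-rational cyclic semirings}, two factorizations corresponding to tuples $(z_0, \dots, z_{n-1})$ and $(z_0', \dots, z_{n-1}')$ of $b$ are adjacent in $\nabla_b$ precisely when $\gcd(z_r, z_r') \neq 0$ in $\mathsf{Z}(\nn_0[q])$ for some $r \in \ldb 0, n-1 \rdb$.

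I would next split into cases according to how many components $t_r$ admit multiple factorizations. If none does, then $b$ has a unique factorization and is not Betti. If exactly one $t_{r_0}$ does while some other $t_j$ is nonzero, then $t_j$ has a unique nonzero factorization $z_j$ that appears as the $j$-th coordinate of every tuple in $\mathsf{Z}(b)$, so every pair of factorizations of $b$ shares an atom and $\nabla_b$ is a complete graph. If instead $t_j = 0$ for every $j \neq r_0$, then $b = \alpha^{r_0} t_{r_0}$ and the adjacency relation yields a graph isomorphism $\nabla_b \cong \nabla_{t_{r_0}}$; by Lemma~\ref{lem:Betti element in the rational case}, this $b$ is Betti if and only if $t_{r_0} = \mathsf{n}(q)^{m+1}/\mathsf{d}(q)^m$ for some $m \in \nn_0$, producing exactly the elements listed in the proposition.

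The remaining case, in which two components $t_{r_1}$ and $t_{r_2}$ both admit multiple factorizations (and hence are both nonzero), is the main obstacle. I plan to show that $\nabla_b$ is connected by constructing a path between any two tuples $\vec z, \vec z' \in \mathsf{Z}(b)$: update the coordinates one at a time in any order that leaves $r_1$ until last. At every intermediate step the untouched $r_1$-coordinate remains equal to the nonzero $z_{r_1}$ in both consecutive tuples, providing the shared atom needed for the edge; the final swap, which updates the $r_1$-component, uses the $r_2$-component, which has by then been updated to the nonzero $z_{r_2}'$ and is therefore common to both the previous and new tuples, as the shared atom. Thus $b$ is not Betti in this case. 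Combining the three cases yields the desired equality, and apart from the path construction in the last case, everything else is a direct translation through $f$ together with Lemma~\ref{lem:Betti element in the rational case}.
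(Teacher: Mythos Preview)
Your proposal is correct and follows essentially the same approach as the paper: both reduce via the isomorphism $f$ of Lemma~\ref{lem:factorizations in quasi-rational cyclic semiring} to the product $\mathsf{Z}(\nn_0[q])^n$, invoke Lemma~\ref{lem:Betti element in the rational case} for the single-coordinate elements, and argue connectivity of $\nabla_b$ for elements with at least two nonzero coordinates by updating coordinates one at a time. Your case split is slightly finer than the paper's (you isolate the case ``exactly one $t_{r_0}$ has multiple factorizations while some other $t_j$ is nonzero'' and observe $\nabla_b$ is complete, whereas the paper treats all multi-power elements uniformly via the coordinate-update path), but the underlying ideas are identical.
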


\begin{proof}
	For a fixed $r \in \ldb 0, n-1 \rdb$, consider the set $B_r := \{t \alpha^r \in \text{Betti}(\nn_0[\alpha]) \mid t \in \qq_{> 0}\}$. For $t \alpha^r \in B_r$, it follows from Lemma~\ref{lem:factorizations in quasi-rational cyclic semiring} that the set of factorizations of $t \alpha^r$ is in a natural bijection with the set of factorizations of the element $(0,0, \dots, 0, t, 0, \dots, 0)$ in the monoid $\nn_0[q]^n$, where the only nonzero entry $t$ occupies the $(r + 1)$-th position. 
	It follows from Lemma~\ref{lem:Betti element in the rational case} that
	\[
		\text{Betti}(\nn_0[q]) = \bigg\{ \frac{\mathsf{n}(q)^{m+1}}{\mathsf{d}(q)^m} \ \Big{|} \ m \in \nn_0 \bigg\}.
	\]
	This, along with Lemma~\ref{lem:factorizations in quasi-rational cyclic semiring}, guarantees that 
	\[
		B_r = \bigg\{ \frac{\mathsf{n}(q)^{m+1}}{\mathsf{d}(q)^m} \alpha^r \ \Big{|} \ m \in \nn_0 \bigg\}.
	\]
	
	We proceed to argue that all elements of $\nn_0[\alpha]$ that involve at least two different (linearly independent) powers of $\alpha$ cannot be Betti elements. Let $b = t_0 + t_1 \alpha + t_2 \alpha^2 + \cdots + t_{n-1} \alpha^{n-1}$ be such an element, where $t_i \in \qq_{\ge 0}$, and at least two of the $t_i$s are nonzero. It follows from Lemma~\ref{lem:factorizations in quasi-rational cyclic semiring} that the set of factorizations of $b$ is in a natural bijection with the set of factorizations of $(t_0, t_1, \dots, t_{n-1})$ in $\nn_0[q]^n$. Let $z$ and $z'$ be two arbitrary factorizations of $b$, and let $(z_0, z_1, \dots , z_{n-1})$ and $(z_0' , z_1' , \dots, z_{n'-1})$ be the $n$-tuples of factorizations corresponding to them via the map $f$ in Lemma~\ref{lem:factorizations in quasi-rational cyclic semiring}. The equality $\pi(z) = \pi(z') = b$, along with part~(2) of Lemma~\ref{lem:factorizations in quasi-rational cyclic semiring}, ensures that $\pi(z_i) = \pi(z_i')$ for every $i \in \ldb 0, n-1 \rdb$. As a result,
	\[
		(z_0, z_1, \dots, z_{n-1}) \to (z_0', z_1, \dots, z_{n-1}) \to (z_0' ,z_1', \dots, z_{n-1}) \to \dots \to (z_0' ,z_1', \dots, z_{n-1}')
	\]
	is a sequence consisting of factorizations of $(t_0, t_1 , \dots, t_{n-1})$. Moreover, since at least two of these factorizations are nonzero, each two consecutive factorizations have nonzero greatest common divisor, and so this is a path within the factorization graph $\nabla_{(t_0, t_1, \dots, t_{n-1})}$. Therefore $(t_0, t_1, \dots, t_{n-1})$ is not a Betti element and, therefore, it follows from Lemma~\ref{lem:factorizations in quasi-rational cyclic semiring} that $b$ is not a Betti element.
	
	As a consequence, we conclude that
		\[
	 		\text{Betti}(\nn_0[\alpha]) = \bigcup_{r=0}^{n - 1} B_r = \bigg\{ \frac{\mathsf{n}(q)^{m+1}}{\mathsf{d}(q)^m} \alpha^r \ \Big{|} \ r \in \ldb 0, n - 1 \rdb \ \text{ and } \ m \in \nn_0 \bigg\}.
		\]
\end{proof}

\bigskip
\section{The Catenary Degree}
\label{sec:catenary degree}

In this section, we study the catenary degree of evaluation polynomial semirings. The catenary degree of $\mathbb{N}_0[\alpha]$ when $\alpha$ is a positive rational is known. Indeed, by virtue of \cite[Corollary~3.4]{CGG20}, for each $q \in \qq_{> 0}$ such that $\nn_0[q]$ is atomic but not a UFM, the catenary degree of $\nn_0[q]$ equals $\max\{\textsf{n}(q), \mathsf{d}(q)\}$.

\begin{lemma}
	Let $\alpha$ be a positive algebraic number such that the monoid $\nn_0[\alpha]$ is atomic. If $\mathsf{c}( \nn_0[\alpha]) \le 2$, then $\nn_0[\alpha]$ is a UFM.
\end{lemma}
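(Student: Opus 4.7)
The plan is to argue the contrapositive: assume $\nn_0[\alpha]$ is atomic and not a UFM, and deduce $\mathsf{c}(\nn_0[\alpha]) \ge 3$. Since $\nn_0[\alpha]$ embeds into $(\rr, +)$, two distinct factorizations $z, z'$ of the same element can never be at distance $1$: this would give $z = g + a$ and $z' = g + a'$ with $a \ne a'$ atoms satisfying $\pi(a) = \pi(a')$, forcing $a = a'$ in $\rr$. Consequently, any step of distance exactly $2$ in a chain between distinct factorizations must be a \emph{two-atom swap}, namely $z' = z - \{a_1, a_2\} + \{b_1, b_2\}$ with $a_1 + a_2 = b_1 + b_2$ and $\{a_1, a_2\} \cap \{b_1, b_2\} = \emptyset$. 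Because $\nn_0[\alpha]$ is not a UFM, some element has two distinct factorizations, and peeling off the first step of a 2-chain provided by $\mathsf{c}(\nn_0[\alpha]) \le 2$ extracts a two-atom swap
\[
    \alpha^{n_1} + \alpha^{n_2} = \alpha^{m_1} + \alpha^{m_2}
\]
with four distinct exponents and all four powers in $\mathcal{A}(\nn_0[\alpha])$; here $\alpha \ne 1$ since $\nn_0[1] = \nn_0$ has only one atom. Among all such four-atom relations I fix one of minimum \emph{spread} (largest exponent minus smallest), and a quick sign analysis (using $\alpha > 0$, $\alpha \ne 1$) places the extreme exponents on the same side, so after relabelling one may assume $n_1 < m_1 \le m_2 < n_2$.

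The polynomial $P(X) = X^{n_1} + X^{n_2} - X^{m_1} - X^{m_2}$ vanishes at $\alpha$ and at $1$, so $P = (X-1) Q$ with $Q(X) = \sum_{k=m_2}^{n_2-1} X^k - \sum_{k=n_1}^{m_1-1} X^k$; since $\alpha \ne 1$, $Q(\alpha) = 0$, yielding the identity
\[
    \sum_{k=n_1}^{m_1-1} \alpha^k = \sum_{k=m_2}^{n_2-1} \alpha^k
\]
in $\nn_0[\alpha]$, over disjoint exponent intervals. Atomicity of $\alpha^{n_1}$ and $\alpha^{m_2}$ forces each side to have length at least $2$: a side of length $1$ would express $\alpha^{n_1}$ or $\alpha^{m_2}$ as a sum of $\ge 2$ nonzero monoid elements or collapse to $n_1 = m_2$, both absurd. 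Moreover, both sides having length exactly $2$ would give $\alpha^{n_1}(1+\alpha) = \alpha^{m_2}(1+\alpha)$, hence $n_1 = m_2$, again impossible. Thus at least one side has length $\ge 3$, and the common element $e$ admits two distinct factorizations of disjoint support.

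Applying $\mathsf{c}(e) \le 2$ to $e$ yields a 2-chain between these two factorizations whose first step is another two-atom swap, operating on atoms from the decomposition of one of the two sides. The sign analysis applies again to this new four-atom relation, forcing the two replacement atoms to have exponents strictly between those of the two removed atoms. Hence the new swap has spread at most $\max(m_1 - 1 - n_1,\, n_2 - 1 - m_2) < n_2 - n_1$, contradicting the minimality of our initial spread. The main obstacle I anticipate is precisely this confinement step: if some $\alpha^k$ with $k \in [n_1, m_1-1] \cup [m_2, n_2-1]$ is not itself an atom, its atomic decomposition can contribute atoms whose exponents lie outside those intervals, so the extracted two-atom swap need not have smaller spread. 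Surmounting this will likely require a refined minimality choice (perhaps minimizing the spread together with the largest exponent of any involved atom) or an appeal to the explicit description of $\mathcal{A}(\nn_0[\alpha])$ given in \cite{CG22} to guarantee a two-atom swap confined to the desired interval.
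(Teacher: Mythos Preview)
Your proposal is incomplete, as you yourself flag at the end: the powers $\alpha^k$ appearing in the derived identity $\sum_{k=n_1}^{m_1-1}\alpha^k=\sum_{k=m_2}^{n_2-1}\alpha^k$ need not all be atoms, so these expressions are not factorizations of $e$ and you cannot directly invoke $\mathsf{c}(e)\le 2$ on them. There is a second gap you do not acknowledge: even if all those powers were atoms, the sign analysis applied to the extracted swap $\alpha^{a_1}+\alpha^{a_2}=\alpha^{b_1}+\alpha^{b_2}$ (with $a_1,a_2$ coming from one side of the $e$-identity) only tells you that the two extreme exponents lie on \emph{some} common side; it does not force them onto the $\{a_1,a_2\}$ side. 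If instead $b_1$ and $b_2$ are the extremes, the spread $|b_2-b_1|$ may well exceed $n_2-n_1$, and your minimality argument collapses. A smaller oversight: a two-atom swap can repeat an atom on one side (e.g.\ $2\alpha^n=\alpha^{m_1}+\alpha^{m_2}$), so the ``four distinct exponents'' hypothesis is not automatic.

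The paper's proof sidesteps all of this with a two-line reduction. Your opening observations---that distance $1$ between distinct factorizations is impossible in a reduced monoid, and that a distance-$2$ step must then be a two-for-two swap and hence length-preserving---already constitute a proof of the well-known general fact that $\mathsf{c}(M)\le 2$ implies $M$ is half-factorial. The paper simply quotes this implication and then invokes \cite[Theorem~5.4]{CG22}, which asserts that for monoids of the form $\nn_0[\alpha]$ half-factoriality forces unique factorization. That second step---HFM $\Rightarrow$ UFM for these particular monoids---is exactly where your direct argument stalls, and it is a nontrivial structural result about $\nn_0[\alpha]$; the paper defers it to the cited reference rather than reproving it from scratch.
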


\begin{proof}
	Assume that $\mathsf{c}( \nn_0[\alpha]) \le 2$. It is well known that an atomic monoid whose catenary degree is at most $2$ must be an HFM. Therefore $\nn_0[\alpha]$ is an HFM. Now it follows from \cite[Theorem~5.4]{CG22} that $\nn_0[\alpha]$ is a UFM.
\end{proof}

Atomic monoids of the form $\nn_0[\alpha]$ (for positive algebraic $\alpha$) can be used to realize any possible value of catenary degrees. This is proved in the following theorem.

\begin{theorem}
	For every $c \in \{0\} \cup \nn_{\ge 3}$ and $d \in \nn$, there exists a positive algebraic $\alpha$ such that $\nn_0[\alpha]$ is a rank-$d$ atomic monoid with $\mathsf{c}(\nn_0[\alpha]) = c$. Moreover, for any $c \in \nn_{\ge 3}$ and $d \in \nn$ the following statements hold.
	\begin{enumerate}
		\item There exists a positive algebraic $\alpha$ such that $\nn_0[\alpha]$ has rank $d$, satisfies the ACCP, and $\mathsf{c}(\nn_0[\alpha]) = c$.
		\smallskip
		
		\item If $c \in \nn$, then there exists a positive algebraic $\alpha$ such that $\nn_0[\alpha]$ has rank $d$, is atomic, does not satisfy the ACCP, and $\mathsf{c}(\nn_0[\alpha]) = c$.
	\end{enumerate}
\end{theorem}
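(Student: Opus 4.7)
The plan is to produce, for each prescribed pair $(c, d)$, a positive algebraic number $\alpha$ of the form $\sqrt[d]{q}$ for a suitable positive rational $q$, and then to read off rank, catenary degree, and the ACCP/non-ACCP dichotomy from the structural bijection furnished by Lemma~\ref{lem:factorizations in quasi-rational cyclic semiring}.

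For $c = 0$ and any $d \in \nn$, I would set $\alpha := \sqrt[d]{2}$, which has degree $d$ over $\qq$ by Eisenstein's criterion. Then $\gp(\nn_0[\alpha]) = \zz[\alpha]$ has $\qq$-rank $d$, and the identity $\alpha^d = 2 \in \nn_0$ gives $\nn_0[\alpha] = \nn_0 + \nn_0 \alpha + \cdots + \nn_0 \alpha^{d-1}$. Since $1, \alpha, \ldots, \alpha^{d-1}$ are $\qq$-linearly independent, this representation is unique, so $\nn_0[\alpha]$ is a UFM of rank $d$ with $\mathsf{c}(\nn_0[\alpha]) = 0$. This disposes of the case $c = 0$.

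For $c \in \nn_{\ge 3}$ and $d \in \nn$, I would take $\alpha := \sqrt[d]{q}$ where $q = a/b$ is a positive rational in lowest terms with $\max(a,b) = c$ and with $q$ not a $p$-th power of any rational for any prime $p \mid d$, so that $\alpha$ is a positive irreducible $d$-th root of $q$ by Lemma~\ref{lem:linearly independency}. Such a $q$ exists for every $c \ge 3$: one can for instance choose the numerator or denominator to be a prime absent from the other side, the only mildly awkward case being $c = 9$ (where $q = c/(c-1) = 9/8$ is excluded by Mihailescu's theorem) and which I would handle by taking $q \in \{2/9, 9/2\}$ instead. By Proposition~\ref{prop:atomicity and lengths of quasi-rational cyclic semirings}(1), $\nn_0[\alpha]$ is atomic with $\mathcal{A}(\nn_0[\alpha]) = \{\alpha^k \mid k \in \nn_0\}$, and $\deg \alpha = d$ yields rank $d$. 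To compute $\mathsf{c}(\nn_0[\alpha])$ I use the bijection $f$ of Lemma~\ref{lem:factorizations in quasi-rational cyclic semiring}: writing $b \in \nn_0[\alpha]$ as $\sum_{i=0}^{d-1} t_i \alpha^i$ with $t_i \in \nn_0[q]$, factorizations of $b$ correspond to tuples $(z_0, \ldots, z_{d-1}) \in \prod_{i=0}^{d-1} \mathsf{Z}_{\nn_0[q]}(t_i)$, and the $\nn_0[\alpha]$-distance between two such tuples that agree in all coordinates but one equals the $\nn_0[q]$-distance of their differing coordinates. Lifting a $\mathsf{c}(\nn_0[q])$-chain in a single coordinate at a time and concatenating across the coordinates yields $\mathsf{c}(\nn_0[\alpha]) \le \mathsf{c}(\nn_0[q]) = c$ by \cite[Corollary~3.4]{CGG20}, while the lower bound is realized by the element $\mathsf{n}(q) \in \nn_0[q] \subseteq \nn_0[\alpha]$, whose $\nn_0[\alpha]$-factorization structure coincides with its $\nn_0[q]$-factorization structure under $f$.

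Finally, I separate parts~(1) and~(2) purely by the choice of $q$. For part~(1), take $q > 1$ (e.g., $q = c/(c-1)$ for $c \ne 9$); then $\alpha > 1$, so $0$ is not a limit point of $\nn_0[\alpha]^\bullet$, whence $\nn_0[\alpha]$ is a BFM and therefore satisfies ACCP. For part~(2), take $q = a/b$ with $2 \le a < b = c$ and $\gcd(a, b) = 1$; then $q < 1$, and the decreasing sequence $a_i := b \, q^i$ satisfies $a_i - a_{i+1} = (b-a)\, q^i \in \nn_0[q]^{\bullet} \subseteq \nn_0[\alpha]^{\bullet}$, producing a strictly ascending non-stabilizing chain $a_0 + \nn_0[\alpha] \subsetneq a_1 + \nn_0[\alpha] \subsetneq \cdots$ of principal ideals, so ACCP fails while atomicity, rank, and catenary degree are preserved by the argument above. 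The principal technical obstacle throughout is the catenary-degree identity $\mathsf{c}(\nn_0[\alpha]) = \mathsf{c}(\nn_0[q])$, whose upper bound forces one to verify carefully that the coordinatewise chain-lifting via $f$ indeed does not inflate distances; once this identity is in hand, everything else reduces to bookkeeping against the results already developed in the previous sections.
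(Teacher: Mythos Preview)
Your proposal is correct and takes a genuinely different route from the paper for the catenary-degree computation, even though the two constructions produce essentially the same $\alpha$ (a positive $d$-th root of a rational of height~$c$). The paper chooses $\alpha$ with minimal polynomial $pX^d - c$ for part~(1) and $cX^d - p$ for part~(2), where $p$ is a prime with $p < c$ and $p \nmid c$, and then establishes $\mathsf{c}(\nn_0[\alpha]) = c$ from first principles: the upper bound is obtained by an inductive construction of explicit $c$-chains (repeatedly trading $c$ copies of $\alpha^k$ for $p$ copies of $\alpha^{k \pm d}$ and tracking a ``largest bad coefficient'' invariant $s(z)$ until it disappears), and the lower bound follows because $m(X)$ divides the difference of any two factorization polynomials, forcing some coefficient to have absolute value at least~$c$. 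You instead transport the problem wholesale to $\nn_0[q]^d$ via the bijection $f$ of Lemma~\ref{lem:factorizations in quasi-rational cyclic semiring}, check that coordinatewise chain-lifting preserves distances, and quote \cite[Corollary~3.4]{CGG20} for $\mathsf{c}(\nn_0[q]) = \max(\mathsf{n}(q), \mathsf{d}(q))$. Your route is shorter and exploits the machinery of Section~\ref{sec:sets of lengths} more fully; the paper's route is more self-contained and would survive without Lemma~\ref{lem:factorizations in quasi-rational cyclic semiring}. Two small remarks: your special-casing of $c = 9$ is unnecessary, since $9/8$ is not a $p$-th power for any single prime~$p$ (the exponents in $9 = 3^2$ and $8 = 2^3$ differ, so Mihailescu is not even relevant here); and for the lower bound it is slicker simply to note that the integer-valued supremum $\mathsf{c}(\nn_0[q]) = c$ is attained at some $x_0 \in \nn_0[q]$, whose factorization structure in $\nn_0[\alpha]$ coincides with that in $\nn_0[q]$ via~$f$, rather than to single out $\mathsf{n}(q)$ without further justification.
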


\begin{proof}
	Suppose first that $c=0$ and $d \in \nn$. The polynomial $m(X) = X^d - 2$ is irreducible by Eisenstein's Criterion. Let $\alpha$ be the positive root of $m(X)$. It follows from \cite[Proposition~3.2(2)]{CG22} that $\text{rank} \, \nn_0[\alpha] = d$. In addition, $\nn_0[\alpha]$ is a UFM in light of \cite[Theorem~5.4]{CG22} and, therefore, $\mathsf{c}(\nn_0[\alpha]) = 0$. The case when $c \in \nn_{\ge 3}$ in the first statement of the theorem is a special case of part~(1), which we proceed to argue after introducing a useful definition for the rest of our proof.
	
	For every factorization $z = \sum_{i=0}^{k} c_i \alpha^i \in \mathsf{Z}(M)$, we define $s(z)$ as the greatest $t$ such that $c_t \geq c$ or $s(z) = -1$ if $c_t < c$ for every $t \in \ldb 0,k \rdb$.
	\smallskip
	
	(1) Suppose that $c \in \nn_{\ge 3}$ and $d \in \nn$.
	Consider the polynomial $m(X) = pX^d - c$ for some $p \in \pp$ such that $p < c$ and $p\nmid c$. Since $m(1)<0$, $m(X)$ has a positive real root $\alpha$ such that $\alpha > 1$. Let $\overline{m}(X)= cX^d-p$ be the reciprocal polynomial of $m(X)$. By virtue of Eisenstein's Criterion $\overline{m}(X)$ is irreducible over $\zz$. It is known that $m(X)$ must be also irreducible over $\zz$. Then $m(X)$ is the minimal polynomial of $\alpha$. It follows from \cite[Proposition 3.2]{CG22} that $\text{rank }M = \deg m(X)= d$. Since $\alpha > 1$, we see that $0$ is not a limit point of $\nn_0[\alpha]^\bullet$ and, therefore, it follows from \cite[Proposition~4.5]{fG19} that $M$ is a BFM, and so it satisfies the ACCP.  In particular, $M$ is atomic. In addition, it is not hard to verify that $\mathcal{A}(\nn_0[\alpha]) = \{ \alpha^n \mid n \in \nn_0 \}$.
	
	\smallskip
	\noindent \textit{Claim.} For every $z \in \mathsf{Z}(M)$ there exists $\gamma_z \in \mathsf{Z}(\pi(z))$ such that $s(\gamma_z) = -1$ and  we can find a $c$-chain of factorizations connecting $z$ and $\gamma_z$.

	\smallskip
	\noindent \textit{Proof of Claim.}  We use induction on $s(z)$. Observe that when $s(z) = -1$ we can take $\gamma_z = z$ and our claim follows. Assume that $k \in \nn_0$ and that our claim holds for every $z \in \mathsf{Z}(M)$ such that $s(z) < k$.  Now take $z \in \mathsf{Z}(x)$ such that $s(z) = k$ for some $x \in M$. Write $z = \sum_{i=0}^{t} c_i \alpha^i$ where $c_i \in \nn_0$ for every $i \in \ldb 0,t \rdb$. Observe that $p \alpha^{k+d} = c \alpha^{k}$ since $\alpha$ is a root of $X^{k} m(X)$. Let $a$ and $b$ be the unique positive integers satisfying that $c_k = ca + b$ and  $b < c$. Consider the chain $z = z_0, z_1, \dots, z_a$ of factorizations  of $x$ defined as 
	\[		
		z_j = \sum_{i \in \ldb 1, t \rdb \setminus \{ k\} } \! \! \! c_i \alpha^i + (p j) \alpha^{k+d} + (c(a - j) + b) \alpha^{k},
	\]
	for every $j \in \ldb 0 , a\rdb$. Notice that $\mathsf{d}(z_{j-1}, z_{j}) = c$ for each $j \in \ldb 1,a \rdb$. Hence $z_0, z_1, \dots, z_a$ is a $c$-chain of factorizations of $x$. Moreover, $s(z_a) < s(z) = k$, and by induction hypothesis  there exists a $\gamma_z \in \mathsf{Z}(x)$ with $s(\gamma_z) = -1$ and some $c$-chain of factorizations connecting $z_a$ and $\gamma_z$. Our claim follows after concatenating both chains. 
	
	Fix $x \in M$. We can easily check that there exists at most one $\gamma \in \mathsf{Z}(x)$ such that $s(\gamma) = -1$. Then from our claim we have that for any two factorizations $z, w \in \mathsf{Z}(x)$ there exist two $c$-chain of factorizations $z = z_0, z_1, \dots, z_a = \gamma_z$ and $w = w_0, w_1, \dots, w_{a'} = \gamma_w$ such that $s(\gamma_z) = s(\gamma_w) = -1$. Hence $\gamma_z = \gamma_w = \gamma$ and therefore $z = z_0, \dots, z_a = \gamma = w_{a'}, w_{a'-1}, \dots w_0 = w$ is a $c$-chain of factorizations connecting $z$ and $w$. Thus, $\mathsf{c}(x) \le c$ for every $x \in M$.
	
	Finally, suppose that $z_1$ and $z_2$ are two different factorizations of the same element $x$ such that $\mathsf{d}(z_1, z_2) < c$. Then every atom $a$ appears at most $c-1$ times in $z_1 - \gcd(z_1, z_2)$  as well as in $z_2 - \gcd(z_1, z_2)$. Hence every coefficient of the polynomial $z_1(X) - z_2(X)$ is smaller than~$c$, which contradicts the fact that $m(X)$ divides it. We have then that $\mathsf{c}(x) \ge c$ for every $x \in M$. Thus, we conclude that $\mathsf{c}(M) = c$.
	
	(2) Suppose that $c \in \nn_{\ge 3}$ and $d \in \nn$. Now consider the polynomial $m(X) = c X^d - p$ for some $p \in \pp$ with $p < c$ and $p \nmid c$.   Because $m(0) < 0$ and $m(1) > 0$, the polynomial $m(X)$ has some positive root $\alpha$ such that $\alpha < 1$. We know from part (1) that $m(X)$ is irreducible over $\zz$ and, therefore, it is the minimal polynomial of~$\alpha$. Now set $M := \nn_0[\alpha]$. 
	Using the same arguments as in part (1) we obtain that $\text{rank }M = d$ and $M$ is atomic with $\mathcal{A}(M) = \{\alpha^n \mid n \in \nn_0\}$. On the other hand, it follows from \cite[Theorem~4.7]{CG22} that $M$ does not satisfy the ACCP. 
	
	\smallskip
	\noindent \textit{Claim.} For every $z \in \mathsf{Z}(M)$ there exists $\gamma_z \in \mathsf{Z}(\pi(z))$ such that $s(\gamma_z) < d $ and  we can find a $c$-chain of factorizations connecting $z$ and $\gamma_z$.
	
	\smallskip
	\noindent \textit{Proof of Claim.}  We use induction on $s(z)$. If $s(z) < d$, then we can take $\gamma_z = z$ and our claim follows. Assume that $k \in \nn_0$ and that our claim holds for every $z \in \mathsf{Z}(M)$ such that $s(z) < k$.  Now take $z \in \mathsf{Z}(x)$ such that $s(z) = k$ for some $x \in M$. Write $z = \sum_{i=0}^{t} c_i \alpha^i$ where $c_i \in \nn_0$ for every $i \in \ldb 0,t \rdb$. Observe that $c \alpha^{k} = p \alpha^{k-d}$ since $\alpha$ is a root of $X^{k - d} m(X)$. Let $a$ and $b$ be the unique positive integers satisfying that $c_k = ca + b$ and  $b < c$. Similarly as in part (1) we can take $z = z_0, z_1, \dots, z_a \in \mathsf{Z}(x)$ such that for each $j \in \ldb 1,a \rdb$ the factorization $z_j$ is obtained from $z_{j-1}$ after replacing $c$ occurrences of the atom $\alpha^k$  by $p$ occurrences of $\alpha^{k-d}$. Hence $z_0, z_1, \dots, z_a$ is a $c$-chain of factorizations of $x$. Moreover, $s(z_a) < s(z) = k$, and by induction hypothesis  there exists a $\gamma_z \in \mathsf{Z}(x)$ with $s(\gamma_z) < d $ and some $c$-chain of factorizations connecting $z_a$ and $\gamma_z$. Our claim follows after concatenating both chains. 
	
	Using our claim and mimicking what we did in part (1), we conclude that $\mathsf{c}(M) = c$.
\end{proof}

As the following example illustrates, there are positive algebraic $\alpha$ such that $\nn_0[\alpha]$ is atomic and $\mathsf{c}(\nn_0[\alpha]) = \infty$.

\begin{example}
	Consider the polynomial $m(X) = X^4 - X^3 - X^2 - X + 1 \in \zz[X]$. We have seen in Example~\ref{ex:set of lengths that is not an AAP} that $m(X)$ has an algebraic root $\alpha > 1$. We have seen in the same example that $ \nn_0[\alpha]$ is atomic with $\mathcal{A}( \nn_0[\alpha]) = \{\alpha^n \mid n \in \nn_0 \}$ and also that, for each $k \in \nn$, the element $y_k := \alpha^{3k+1} + 1$ has the following two factorizations:
	\[
		z_1 := \alpha^{3k+1} + 1 \quad \text{ and } \quad z_2 := \alpha^{3k} + \alpha + \sum_{i=1}^k \alpha^{3i-1}.
	\]
        Now let us assume that $\mathsf{c}(\nn_0[\alpha]) = c \in \nn$. We have seen in the discussion of Example~\ref{ex:set of lengths that is not an AAP} that there exists $k \in \nn$ such that $\mathsf{L}(y_k) \cap \ldb 3, c + 2 \rdb$ is empty. Now let's consider a $c$-chain between $z_1$ and $z_2$: $w_0 = z_1, w_1, \ldots, w_\ell = z_2$. Let $i_0 = \min \left\{ i \in \ldb 1, \ell \rdb \mid \lvert w_i \rvert > 2 \right\}$. Then $\lvert w_{i_0} \rvert \in \ldb 3, c + 2 \rdb$, a contradiction with our assumption that $\mathsf{L}(y_k) \cap \ldb 3, c + 2 \rdb = \emptyset$.
 
	As a result, $\mathsf{c}(\nn_0[\alpha]) = \infty$.
\end{example}

\bigskip
\section*{Acknowledgments}

The authors are grateful to their mentor, Dr. Felix Gotti, for proposing this project and for his guidance during the preparation of this paper. While working on this paper, the authors were part of CrowdMath, a massive year-long math research program hosted by MIT PRIMES and the Art of Problem Solving (AoPS). The authors express their gratitude to the directors and organizers of CrowdMath, MIT PRIMES, and AoPS for making this research experience possible.

\bigskip \bigskip

\end{document}